\numberwithin{equation}{section}
\theoremstyle{definition}
\theoremstyle{theorem}
\newtheorem{thm}[equation]{Theorem}
\newtheorem*{thm*}{Theorem}
\newtheorem{lemma}[equation]{Lemma}
\newtheorem{prop}[equation]{Proposition}
\newtheorem{coro}[equation]{Corollary}
\newtheorem{ex}[equation]{Example}
\theoremstyle{remark}
\newtheorem{remark}[equation]{Remark}
\newcommand{\exref}[1]{Ex\-am\-ple \ref{#1}}
\newcommand{\thmref}[1]{Theo\-rem \ref{#1}}
\newcommand{\lemref}[1]{Lem\-ma \ref{#1}}
\newcommand{\propref}[1]{Prop\-o\-si\-tion \ref{#1}}
\newcommand{\corref}[1]{Cor\-ol\-lary \ref{#1}}
\newcommand{\remref}[1]{Re\-mark \ref{#1}}
\providecommand{\onto}{\twoheadrightarrow }
\providecommand{\into}{\hookrightarrow }
\DeclareMathOperator{\coker}{coker }
\DeclareMathOperator{\End}{End }
\DeclareMathOperator{\Adj}{Adj }
\renewcommand{\hom}{{\rm Hom}}
\DeclareMathOperator{\GL}{GL}
\DeclareMathOperator{\Isom}{Isom}
\DeclareMathOperator{\im}{im }
\newcommand{\cev}[1]{\reflectbox{\ensuremath{\vec{\reflectbox{\ensuremath{#1}}}}}}
\newcommand{\lversor}{\obackslash}
\newcommand{\rversor}{\oslash}
\newcommand{\lt}[1]{\cev{#1}}
\newcommand{\rt}[1]{\vec{#1}}
\newcommand{\llt}[1]{\overleftarrow{#1}}
\newcommand{\rrt}[1]{\overrightarrow{#1}}
\newcommand{\mor}[1]{\big(\lt{#1},\rt{#1}\big)}
\begin{document}

\title{Division, adjoints, and dualities of bilinear maps}
\author{James B. Wilson}
\address{
	Department of Mathematics\\
	The Ohio State University\\
	Columbus, Ohio 43210\\
}
\email{wilson@math.ohio-state.edu}
\date{\today}
\keywords{bilinear, division, adjoint, duality}

\begin{abstract} 
The distributive property can be studied through bilinear maps and various morphisms between these maps.  The adjoint-morphisms between bilinear maps establish a complete abelian category with projectives and  admits a duality.  Thus the adjoint category is not a module category but nevertheless it is suitably familiar.  The universal properties have geometric perspectives. For example, products are orthogonal sums.  The bilinear division maps are the simple bimaps with respect to nondegenerate adjoint-morphisms.  That formalizes the understanding that the atoms of linear geometries are algebraic objects with no zero-divisors.  Adjoint-isomorphism coincides with principal isotopism; hence, nonassociative division rings can be studied within this framework.

This also corrects an error in an earlier pre-print; see \remref{rem:not-Morita}.
\end{abstract}

\maketitle

\section{Introduction}

We study the distributive property, that is, \emph{bimaps} (also called biadditive or bilinear maps).  As usual, bimaps $B:U\times V\to W$ relate abelian groups $U$, $V$, and $W$ following the relations $(u+u')Bv=uBv+u'Bv$ and $uB(v+v')=uBv+uBv'$, for all $u,u'\in U$ and all $v,v'\in V$ (further definitions in Section \ref{sec:back}).  Distributive products abound in algebra, making this property important to understand.  Yet, this also explains why no one category has emerged for bimaps.   For instance, the multiplication $\cdot:R\times R\to R$ of a ring $R$ is a bimap; but that bimap can be approached as a ring with ring homomorphisms, a left (or right) module probed by linear maps, or a bilinear form\footnote{Here a form means a bimap $B:U\times V\to W$ where $W$ is a cyclic (bi-)module; cf. Section \ref{sec:back}.} where isometries would be the appropriate transformations.  Several less obvious (but still useful) morphisms between bimaps generalize homotopisms from nonassociative algebra \cite{Albert:nonass-I}, derivations from Lie theory \cite{Jacobson:Lie},  crossed-maps from Jordan pairs \cite{Loos:pairs}, etc.   Thus, the class of bimaps admits not just one category, but an ontology (in the Computer Science vocabulary) of multiple categories, functors, and some `nonassociative' categories.  

Here we focus on just three morphism types:   homotopisms,  adjoint-morphisms, and  nondegenerate adjoint-morphisms.  We name categories after their morphisms and not their objects because bimaps admit many incompatible morphisms.  

Albert \cite{Albert:nonass-I} introduced homotopisms for nonassociative rings but his definitions adapt well to all bimaps (Section \ref{sec:homotopism}).  Homotopisms generalize ring homomorphisms, linear mappings, and isometries, and so these are perhaps the most used morphisms between bimaps.  For example, there are several efforts to discover new isotopism classes of semifields (nonassociative rings without zero-divisors) and other \emph{division bimaps} (also called nonsingular bimaps), i.e. $D:U\times V\to W$ with $uDv=0$ implying $u=0$ or $v=0$; cf. \citelist{\cite{Kantor:semifield}*{p. 110},\cite{Shapiro:nonsingular}*{p. 228 \& Chapter 16}}.

Adjoint-morphisms (defined in Section \ref{sec:Adj}) are also common, though usually not considered as a category.  In many ways, adjoint-morphisms are the simplest morphisms for bimaps due to the following: their category is complete, co-complete, abelian, and has projectives (Theorems~\ref{thm:abelian} \& \ref{thm:projectives}).  Adjoint categories also admit a duality -- the transpose.  This implies that despite many similarities, adjoint categories are never equivalent to full module categories (\thmref{thm:adj-not-mod}).  Functorial translations between the homotopism and adjoint categories are essential in applications.  One of the critical relationships is a Galois correspondence set up by tensor products (\thmref{thm:Galois-tensor}).  Though that correspondence appears not to have been formalized until recently (\cite{Wilson:unique-cent}*{p. 2654}), it evolved out of a series of old problems in Group Theory \citelist{\cite{Bayer-Fluckiger:Hasse},\cite{BW:isom},\cite{Wagner:isoms}}. 

The final morphism family involves so-called nondegenerate bimaps, which are perhaps the most common examples of bimaps. In geometric terms, bimaps $B:U\times V\to W$ establish the orthogonality operators
\begin{align}
	X^{\bot} & = \{ v\in V : XBv=0\} & Y^{\top} & = \{u\in U: uBY=0\}
\end{align}
for $X\subseteq U$ and $Y\subseteq V$, and the \emph{radicals} of $B$ are $U^{\bot}$ and $V^{\top}$.  \emph{Nondegenerate} bimaps have trivial radicals.  Notice that the products of unital rings and modules are immediately nondegenerate and bilinear forms are usually assumed to be nondegenerate or a swift argument reduces them to that case.  Adjoint-morphisms are classically interpreted as linear maps that relate the orthogonality operators of one bimap to another  (cf. \lemref{lem:perp-morph}).   So we are concerned with the operators $(\bot,\top)$ as order-reversing mappings between the submodules (subsets) of $U$ and $V$ (indeed they are Galois connections). Here the techniques for forms breakdown substantially because \emph{$(\bot,\top)$ are not bijections}, i.e. they are not \emph{dualities}.  This means that geometric tasks, such as decomposing a bimap into pairwise orthogonal subspaces or determining the isometry group, are no longer trivial observations about standard bases (witness the involved arguments in\citelist{\cite{BW:isom},\cite{Wilson:unique-cent}*{Sections 4--6}}).  So we restrict the lattices (and associated adjoint-morphisms) to the $\bot\top$ and $\top\bot$ stable submodules of $U$ and $V$ respectively, to create a duality.  These restricted \emph{nondegenerate adjoint-morphisms} characterize the minimal intervals in the $\bot\top$ and $\top\bot$ stable sublattices as \emph{nondegenerate adjoint-simple} bimaps (\propref{prop:nondeg-adj}). Here this geometric interpretation translates to a familiar algebraic property.  We show that nondegenerate adjoint-simple bimaps are precisely the division bimaps.  This formalizes the experience that the `atoms' of coordinatized geometry are algebraic objects without zero-divisors (\thmref{thm:division}).

\subsection{Notation}\label{sec:back}

We use $R$ and $S$ for rings, $U$'s for left $R$-modules, and $V$'s for right $S$-modules.   An $(R,S)$-bimodule $W$ is a left $R$-module and a right $S$-module where 
\begin{align*}
	r(ws) & = (rw)s & (\forall r\in R,\forall w\in W,\forall s\in S).
\end{align*}
We write $U_{R^{op}}$ for the right $R^{op}$-module induced from a left $R$-module $U$ and remark that the (co-variant) functor ${}_R U\mapsto U_{R^{op}}$ (which is the identity on homomorphisms) is an isomorphism ${}_R{\tt Mod}\to {\tt Mod}_{R^{op}}$.   We economize on parentheses by the convention that an $R$-linear map $\mu:U\to U'$ of left $R$-modules is evaluated at $u\in U$ by $u\mu$ (so that for all $r\in R$, and all $u\in U$, $ru\mu$ suffices for $r(u\mu)=(ru)\mu$)  and an $S$-linear map $\nu:V\to V'$ of right $S$-modules is expressed by $\nu v$, for each $v\in V$.

Throughout we assume $W={}_R W_S$ is an $(R,S)$-bimodule.  An \emph{$(R,S)$-bimap} is a function $B:U\times V\to W$ of a left $R$-module $U$, a right $S$-module $V$, and an $(R,S)$-bimodule $W$ with the distributive-type properties:
\begin{align}
	(u+u')B v & = uB v+u'B v,\\
	uB(v+v') & = uB v+uB v' & (\forall u,u'\in U,\forall v,v'\in V);
\end{align}
and the associative-type properties:
\begin{align}
	(ru)Bv & = r(uBv), & uB(vs) & = (uBv)s & (\forall u\in U,\forall v\in V,\forall r\in R,\forall s\in S).
\end{align}
\begin{remark}\label{rem:matrices}
We generally think this sort of linear algebra should remain coordinate free; yet, there is some intuition gained by treating $B$ as if it were a matrix.  Indeed, if $U=R^{\oplus m}$ (as row vectors) and $V=S^{\oplus n}$ (as column vectors) then evaluating $B$ on the coordinate bases for $U$ and $V$ specifies an $(m\times n)$-matrix  $M(B)_{ij}=e_i B e_j$ with entries in $W$.  Observe $uBv=uM(B)v$ where the right side is ordinary matrix multiplication.  When $U=V$ we write $uM(B)v^t$.
\end{remark}

We use many basic concepts from modules, categories, and lattices. We refer readers to various other well written sources, mostly  Anderson-Fuller \cite{AF:rings}, Pareigis \cite[Sections 1.7, 1.11, 4.1]{Pareigis}, and \cite{GALOIS}*{Chapter 7}.

\subsection{Homotopism category}\label{sec:homotopism}
For bimaps $B:U\times V\to W$ and $B':U'\times V'\to W'$, a homotopism is a triple $(\phi,\gamma;\kappa)\in \hom_R(U,U')\times \hom_S(V,V')\times \hom_{R,S}(W,W')$,
\begin{align}\label{def:homotopism}
	u\phi C \gamma v & = (uBv)^{\kappa} & (\forall u\in U, \forall v\in V).
\end{align}
As one would expect, these form a category under pointwise composition and this \emph{homotopism} category has kernels (ideals as we shall call them), quotients, and appropriate versions of the isomorphism theorems.  One of the most important ideals of a bimap $B:U\times V\to W$ is its \emph{radical} $\sqrt{B}:V^{\top} \times U^{\bot}\to 0$.   Observe that $B/\sqrt{B}$ is nondegenerate.

Homotopisms admit various relevant subcategories.  For example, if we insist $U=V$ then we may restrict to homotopisms $(\phi,\gamma;\kappa)$ where $\phi=\gamma$.  If further $\phi=\gamma$ and $\kappa$ are invertible then we call $(\phi,\phi;\kappa)$ a \emph{pseudo-isometry}.  If instead we fix $W$ then we often look at so-called \emph{principal} homotopisms $(\phi,\gamma;1_W)$ and, provided also $U=V$, \emph{isometries} are defined as homotopisms of the form $(\phi,\phi;1_W)$.  These also create subcategories.   For \emph{nonassociative algebras}, i.e. bimaps $W\times W\to W$, we obtain the usual \emph{homomorphisms} as homotopisms $(\phi,\phi;\phi)$.

As motivation, we close this brief section with two examples of problems that appear in homotopism categories but are solved by passing to our next category of adjoints.

First, Baer \cite{Baer:class-2} observed that a group $G$, whose center $Z(G)$ contains the commutator $G'$, induces a bimap $B:G/Z(G)\times G/Z(G)\to G'$,
\begin{align}\label{eq:bi-p-groups}
	(xZ(G))B(Z(G)y) & = [x,y] & (\forall x,y\in G).
\end{align}
Isomorphisms between such groups are mapped to pseudo-isometries and in that way the category of pseudo-isometries appears in several works on $p$-groups; e.g. \cite{Higman:enum,Wilson:unique-cent}.  A pressing open problem is to determine ${\rm Aut}~G$ and as a starting point it appears necessary to determine the pseudo-autometries of $B$.  This was recently solved for bimaps $B$ which are also tensor products \cite{BW:aut-tensor} using the Galois connection of Section~\ref{sec:Galois}.

A more subtle use of bimaps arose for intersections of classical subgroups of $\GL(V)$, which had long been studied as algebraic groups with polynomials derived from sets $\Phi=\{\varphi:V\times V\to K_{\varphi}\}$ of bilinear and sesquilinear forms, e.g. \cite{Bayer-Fluckiger:Hasse}.  As exploited in \cite{BW:isom}, this is a modestly disguised problem of a single bimap: set $(\cap\Phi):V\times V\to \bigoplus\{K_{\varphi}: \varphi\in\Phi\}$,
\begin{align}\label{eq:bi-classical}
	(\cap\Phi) (u,v) & = (\varphi(u,v): \varphi\in\Phi)  & (\forall u,v\in V).
\end{align}
Since $\cap_{\varphi\in \Phi} \Isom(\phi)=\Isom(\cap \Phi)$, the structure of this intersection  was immediately extracted from a functorial relationship between the isometry category and the adjoint category; for details see \cite{BW:isom}.  

\section{The category of adjoints}\label{sec:Adj}
The \emph{adjoint} category $\Adj(W)$ has arbitrary $(R,S)$-bimaps $B:U\times V\to W$ (so only $W={}_R W_S$ is fixed) as objects.  Morphisms from $B$ to another $(R,S)$-bimap  $C:U'\times V'\to W$ are pairs $\mor{\mu}\in \hom_R(U,U')\times \hom_S(V',V)$ such that 
\begin{align}\label{def:adjoint}
	u\cev{\mu}C v' & = uB\vec{\mu} v' & (\forall u\in U,\forall v'\in V).
\end{align}
Hereafter we optionally express \eqref{def:adjoint} simply by $\cev{\mu}C=B\vec{\mu}$.   For $(R,S)$-bimaps $B,C$, and $D$ in $\Adj(W)$, we compose $\mor{\mu}\in \Adj(B,C)$ and $\mor{\nu}\in \Adj(C,D)$ by
\begin{align}\label{eq:def-comp}
	\mor{\mu}\mor{\nu} & = (\cev{\mu}\cev{\nu},\vec{\nu}\vec{\mu}).
\end{align}
This establishes a category.  There is an immediate and essential geometric property captured by these morphisms.  For a bimap $B:U\times V\to W$ and subsets $X\subseteq U$ and $Y\subseteq V$, write $X\perp Y$ if $XBY=\{xBy: x\in X,y\in Y\}=0$.

\begin{lemma}\label{lem:perp-morph}
For all $\mor{\mu}\in \Adj(B,C)$,  $(\ker \cev{\mu})\bot( \im \vec{\mu})$ and $(\im \cev{\mu})\bot( \ker \vec{\mu})$.
\end{lemma}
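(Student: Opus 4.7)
The plan is to prove both statements by directly applying the defining adjoint relation $\cev{\mu}C = B\vec{\mu}$, noting carefully in each case which of the two bimaps is the relevant one. The key observation is that the first claim concerns perpendicularity with respect to $B$ (since $\ker\cev{\mu}\subseteq U$ and $\im\vec{\mu}\subseteq V$), while the second concerns perpendicularity with respect to $C$ (since $\im\cev{\mu}\subseteq U'$ and $\ker\vec{\mu}\subseteq V'$).

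For the first claim, I would take an arbitrary $u\in\ker\cev{\mu}$ and an arbitrary element of $\im\vec{\mu}$, which has the form $\vec{\mu}v'$ for some $v'\in V'$. Then the defining equation gives
\begin{align*}
uB(\vec{\mu}v') \;=\; u\cev{\mu}\,C\,v' \;=\; 0\cdot Cv' \;=\; 0,
\end{align*}
so $(\ker\cev{\mu})B(\im\vec{\mu})=0$, i.e.\ $(\ker\cev{\mu})\bot(\im\vec{\mu})$.

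For the second claim, I would take $u\in\im\cev{\mu}$, write it as $u=u_0\cev{\mu}$ for some $u_0\in U$, and take $v'\in\ker\vec{\mu}$. Using the defining equation in the other direction,
\begin{align*}
(u_0\cev{\mu})\,C\,v' \;=\; u_0\,B\,(\vec{\mu}v') \;=\; u_0 B\cdot 0 \;=\; 0,
\end{align*}
so $(\im\cev{\mu})Cv'=0$ for every $v'\in\ker\vec{\mu}$, giving $(\im\cev{\mu})\bot(\ker\vec{\mu})$.

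There is no serious obstacle here; the lemma is essentially a restatement of the adjoint condition \eqref{def:adjoint}, once one is careful about the source and target of $\cev{\mu}$ and $\vec{\mu}$. The only potential point of confusion is bookkeeping: distinguishing the two bimaps $B$ and $C$ and remembering that $\cev{\mu}$ goes $U\to U'$ while $\vec{\mu}$ goes $V'\to V$, so that the kernels and images land in the correct modules for the perpendicularity assertion to even type-check.
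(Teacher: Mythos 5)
Your proof is correct and follows exactly the same route as the paper: both perpendicularity claims are read off directly from the adjoint identity $u\cev{\mu}Cv'=uB\vec{\mu}v'$, with the first interpreted in $B$ and the second in $C$. Your extra care about which bimap governs each orthogonality relation matches the paper's (more terse) argument precisely.
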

\begin{proof}
For all $u\in \ker\cev{\mu}$, and all $v\in \vec{C}$, $0=u\cev{\mu}Cv=uB\vec{\mu} v$. Also,
$(\lt{B}\cev{\mu})C( \ker \vec{\mu})=(\lt{B})B\vec{\mu}(\ker\vec{\mu})=0$.
\end{proof}

\subsection{A Galois connection between adjoints and tensors}\label{sec:Galois}
Since the adjoint category fixes the codomain of a bimap it may appear that we will not be speaking about tensor products.  To the contrary, one critical aspect of the adjoint category is that it specifies the `best' ring for a tensor product.  That point (which seems to have originated from the study of $p$-groups \citelist{\cite{Wilson:unique-cent}*{p. 2651},\cite{BW:aut-tensor}}) is missed entirely whenever we begin with a diagram of modules over fixed rings.  When the interest begins with bimaps (rather than modules), the notion of a universal ring with which to tensor becomes obvious.

Given $\mor{\mu},\mor{\nu}\in \Adj(B,C)$, the sum $(\cev{\mu}+\cev{\nu},\vec{\mu}+\vec{\nu})\in \Adj(B,C)$ and composition in $\Adj(W)$ distributes over this addition (so that $\Adj(W)$ is a preadditive category).  In particular, if $B:U\times V\to W$ is an $(R,S)$-bimap then the adjoint-endomorphisms form the ring
\begin{equation*}
  \Adj(B) =\{(\cev{\mu},\vec{\mu})\in \End_R U\times \End_S V:  \cev{\mu}B=B\vec{\mu}\}.
\end{equation*}
\begin{remark}\label{rem:adj-generic}
Many historic uses of adjoints focus on a single adjoint pair at a time (even within Category Theory this is the norm).  The uses of an entire adjoint ring are largely restricted to bilinear forms.  Forms are quite limiting as their adjoint rings are simple (when the modules involved have finite chain conditions). By contrast, for every field $k$ and $k$-algebra $A$ there is a $k$-bimap $B:V\times V\to k^3$ with $\Adj(B)\cong A$ \cite{Bayer-Fluckiger:any}. 
\end{remark}

For a ring $A$ we say an $(R,S)$-bimap $B:U \times V\to W$ is \emph{$A$-midlinear} (also called $A$-balanced) if
$U$ is an $(R,A)$-bimodule, $V$ is an $(A,S)$-bimodule, and
\begin{align}
	sB & = Bs & (\forall u\in U,\forall v\in V,\forall s\in A).
\end{align}
There is a universal $A$-midlinear $(R,S)$-bimap for any pair $({}_R U_A, {}_A V_S)$, the \emph{tensor product} $\otimes=\otimes_A:{}_R U_A \times {}_A V_S\to {}_R U\otimes_A V_S$.  Universal there means that every $A$-midlinear $(R,S)$-bimap on $U\times V$ factors through $\otimes$.  However, instead of drawing the usual commutative triangle we draw diagram \eqref{fig:tensor} representing a homotopism in anticipation of a later construction.
\begin{equation}\label{fig:tensor}
\xymatrix{
U_{A} \times {}_{A} V \ar@<-2ex>@{=}[d]\ar@<2ex>@{=}[d] \ar[r]^B & W\\
U_{A} \times {}_{A} V  \ar[r]^{\otimes} & U\otimes V\ar[u]_{\pi(B)}
}
\end{equation}
Though we do not need this generality here, it is a simple matter to extend these notions to $\Omega$-operator groups by insisting instead that $U$ and $V$ are equipped with a function $\phi:\Omega\to \End_R U\times \End_S V$.

Now observe a bimap $B$ is always $\Adj(B)$-midlinear.  Indeed, $\Adj(B)$ is universal with that property in the sense that whenever $B$ is $A$-midlinear then the representation of $A$ in $\End_R U\times \End_S V$ is a subset of $\Adj(B)$.  If $B:U\times V\to W$ and $C:U\times V\to W'$ are two $(R,S)$-bimaps, write $B\preceq C$ if $C$ factors through $B$.  This establishes a partial ordering on the set of bimaps on $U\times V$ and with that we arrive at the following simple but extremely helpful realization: for an $(R,S)$-bimap $B:U\times V\to W$ and representation $\phi:A\to\End_R U\times \End_S V$, 
\begin{equation}
	\otimes_{(A,\phi)} \preceq B\qquad\Longleftrightarrow\qquad 
		A\phi\subseteq \Adj(B).
\end{equation}
That is to say, 
\begin{thm}\label{thm:Galois-tensor}
$\langle U\otimes_{(-)} V, \Adj(-)\rangle$ is a Galois connection (\cite{GALOIS}*{p. 155}) between $(R,S)$-bimaps on $U\times V$ and representations in $\End_R U\times \End_S V$.
\end{thm}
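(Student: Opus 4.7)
My plan is to verify that the two assignments $(A,\phi)\mapsto \otimes_{(A,\phi)}$ and $B\mapsto \Adj(B)$ are each order-preserving (with respect to $\preceq$ on bimaps on $U\times V$ and $\subseteq$ on representations in $\End_R U\times \End_S V$) and that they satisfy the adjunction biconditional
\[
\otimes_{(A,\phi)} \preceq B \iff A\phi \subseteq \Adj(B)
\]
stated in the paragraph immediately preceding the theorem; together these are exactly the data of an adjoint (Galois) correspondence in the sense of \cite{GALOIS}*{p.\ 155}.

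For monotonicity of $\Adj$, if $B\preceq B'$ via a homomorphism $\kappa:W\to W'$ with $(uBv)\kappa = uB'v$, then for any $\mor{\mu}\in \Adj(B)$ the identity $u\lt{\mu}Bv = uB\rt{\mu}v$ is preserved by $\kappa$, giving $u\lt{\mu}B'v = uB'\rt{\mu}v$ and hence $\mor{\mu}\in \Adj(B')$. Dually, if $A\phi\subseteq A'\phi'$, then every $A'$-midlinear bimap is automatically $A$-midlinear (the balancing demand is weaker), so in particular $\otimes_{A'}$ itself factors through $\otimes_A$, yielding $\otimes_A\preceq \otimes_{A'}$.

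The biconditional itself is a translation of definitions. The containment $A\phi\subseteq \Adj(B)$ unwinds to $(us)Bv = uB(sv)$ for every $s\in A$, which is exactly the $A$-midlinearity of $B$ via the actions supplied by $\phi$. On the other hand, the universal property of the tensor product characterizes $A$-midlinear bimaps on $U\times V$ as precisely those factoring through $\otimes_{(A,\phi)}$, i.e.\ those $B$ with $\otimes_{(A,\phi)}\preceq B$. Chaining the two equivalences gives the biconditional.

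The only step requiring any care is the convention tracking between adjoint pairs and midlinearity: $\lt{s\phi}\in \End_R U$ encodes the right $A$-action on $U$ while $\rt{s\phi}\in \End_S V$ encodes the left $A$-action on $V$, so that $\lt{s\phi}B = B\rt{s\phi}$ becomes $(us)Bv = uB(sv)$. Beyond this bookkeeping, no new machinery is needed: the theorem is essentially a repackaging of the universal properties of $\Adj(B)$ (as the maximal midlinearity ring of $B$) and of $\otimes_{(A,\phi)}$ (as the universal $A$-midlinear bimap) into Galois form.
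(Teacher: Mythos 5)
Your proposal is correct and follows essentially the same route as the paper: the theorem is obtained by unwinding the displayed equivalence $\otimes_{(A,\phi)}\preceq B \iff A\phi\subseteq\Adj(B)$ from the universal property of $\otimes_{(A,\phi)}$ among $A$-midlinear bimaps and the universality of $\Adj(B)$ among midlinearity rings, exactly as the surrounding paragraph does. Your separate monotonicity checks are harmless but redundant, since order-preservation of both maps is automatic from the adjunction biconditional in the covariant form of a Galois connection used in \cite{GALOIS}.
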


\subsection{Transpose dualities}
Arguably the most important functor for the categories of adjoint-morphisms is the \emph{transpose}, $t=t({}_R W_S)$, that sends $(R,S)$-bimaps $B:U\times V\to W$ to  $(S^{op},R^{op})$-bimaps $B^t:V\times U\to W$,
\begin{align}
	vB^t u & = uBv & (\forall u\in U,\forall v\in V).
\end{align}
Adjoint-morphisms $\mor{\mu}$ are sent to $(\vec{\mu},\cev{\mu})$.  So the transpose is contra-variant.  As $t({}_R W_S)t({}_{S^{op}}W_{R^{op}})=1_{\Adj(W)}$, the pair $\langle t({}_R W_S), t({}_{S^{op}}W_{R^{op}})\rangle$ form a duality between $\Adj({}_R W_S)$ and $\Adj({}_{S^{op}}W_{R^{op}})$.  As a special case, when $k=R=S$ is a commutative ring and $W$ is a (one-sided) $k$-module treated as a $k$-bimodule, the transpose is a contra-variant auto-functor of order $2$.

These dualities have an important consequence.

\begin{thm}\label{thm:adj-not-mod}
$\Adj(W)$ is not is not equivalent to a full category of modules, nor to the dual of a full category of modules.\footnote{Indeed, $\Adj(W)$ cannot be a Grothendieck category since then it would also be a co-Grothendieck; in such categories all objects are zero \cite[p. 116]{Freyd}.}
\end{thm}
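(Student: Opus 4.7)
The strategy is proof by contradiction, following the footnote: I would reduce to Freyd's theorem \cite[p. 116]{Freyd}, which says an abelian category that is simultaneously Grothendieck and co-Grothendieck has only the zero object.

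First I would use the transpose duality to collapse the two alternatives into one.  If $\Adj({}_R W_S)\simeq ({}_A{\tt Mod})^{op}$, then applying $t$ gives $\Adj({}_{S^{op}}W_{R^{op}})\simeq {}_A{\tt Mod}$.  Setting $W'={}_{S^{op}}W_{R^{op}}$, it suffices to derive a contradiction from the single hypothesis $\Adj(W)\simeq {}_A{\tt Mod}$; the other case then follows after swapping the roles of $W$ and $W'$.

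Assume $\Adj(W)\simeq {}_A{\tt Mod}$.  This endows $\Adj(W)$ with a Grothendieck structure (cocompleteness, AB5, a compact projective generator).  Applying the transpose once more, $\Adj(W)^{op}\simeq \Adj(W')$, so $\Adj(W')$ is co-Grothendieck.  On the other hand $\Adj(W')$ is itself an adjoint category, so by \thmref{thm:abelian} and \thmref{thm:projectives} it is abelian, bicomplete, and has a projective generator; combining these with cocompleteness, I would conclude that $\Adj(W')$ is also Grothendieck.  Hence $\Adj(W')$ is simultaneously Grothendieck and co-Grothendieck, and by Freyd it is the zero category.  This contradicts the plain existence of nonzero objects in $\Adj(W')$ when $W\neq 0$.

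The main obstacle is the step asserting that $\Adj(W')$ is Grothendieck: a projective generator together with cocompleteness does not by itself yield AB5 (exactness of filtered colimits).  I would address this by describing filtered colimits in $\Adj(W')$ explicitly---since morphisms in $\Adj(W')$ are covariant in the left-module component and contravariant in the right, filtered colimits decompose into a filtered colimit of left components paired with a filtered limit of right components---and then extracting the required exactness from the module-category hypothesis on $\Adj(W)$, transported across the transpose.
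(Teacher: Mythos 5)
Your overall strategy---collapse the two alternatives into one via the transpose duality and then invoke Freyd's theorem that a nonzero category cannot be simultaneously Grothendieck and co-Grothendieck---is the same strategy the paper's (very terse) proof and its footnote are gesturing at, and your first reduction is correct. However, the argument has a genuine gap at the step ``$\Adj(W')$ is also Grothendieck,'' and the repair you sketch cannot close it.

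First, \thmref{thm:abelian} and \thmref{thm:projectives} do not supply a projective generator: they only identify certain projectives (essentially the versors $P\lversor_R W$), and the remark following \thmref{thm:projective} exhibits nonzero objects $B:0\times V\to W$ with $\Adj(P\lversor_R W,B)=0$ for every $P$. So adjoint categories do not even have enough projectives, let alone a projective generator; and in any case a projective generator plus cocompleteness would still not give AB5, as you note. Second, and more fundamentally, the missing ingredient---AB5 for $\Adj(W')$---cannot be ``transported across the transpose'' from the hypothesis $\Adj(W)\simeq{}_A{\tt Mod}$: the transpose is contravariant, so it converts AB5 of $\Adj(W)$ into AB5${}^{*}$ of $\Adj(W')$, which is exactly the co-Grothendieck half you already have. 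Indeed, under your hypothesis ``$\Adj(W')$ satisfies AB5'' is equivalent to ``${}_A{\tt Mod}$ satisfies AB5${}^{*}$,'' which by the very result of Freyd you are invoking holds only when $A=0$; so your intermediate goal is equivalent to the contradiction you are trying to reach, and the route to it is circular. The explicit description of filtered colimits that you propose (colimit on the left component, limit on the right) is in fact the key to a correct argument, but pointed in the opposite direction: it shows that the full subcategory of bimaps $0\times V\to W$, which is equivalent to $({\tt Mod}_S)^{op}$ and is closed under the relevant (co)limits and kernels in $\Adj(W)$, would inherit AB5 from ${}_A{\tt Mod}$, forcing ${\tt Mod}_S$ to be both Grothendieck and co-Grothendieck and hence zero. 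That is, the contradiction should be extracted from the failure of AB5 in $\Adj(W)$ itself, not from a purported second Grothendieck structure on $\Adj(W')$.
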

\begin{proof}
Module categories are not equivalent to their duals \cite[p. 116]{Freyd}.
\end{proof}

\begin{remark}\label{rem:not-Morita}
Morita initiated the study of $R$-module subcategories that admit dualities \citelist{\cite{AF:rings}*{Section 23},\cite{Morita:dual}}.  In particular, the dualities can be realized by the contra-variant functor $\hom_R(-,W)$ on the \emph{$W$-reflexive} modules, i.e. the modules $U$ for which $U\cong \hom_R(\hom_R(U,W),W)$.  For instance, if $R=k$ is a field and $W=k$ then all finite-dimensional $k$-vector spaces are $k$-reflexive.  However, $\Adj(W)$ is not equivalent to the subcategory of $W$-reflexive $R$-modules, even when $R=k$ is a field.  As mentioned in \remref{rem:adj-generic}, there are $k$-bimaps $B:U\times V\to k^3$ of finite-dimensional $k$-vector spaces whose endomorphism rings, $\Adj(B)$, are arbitrary finite-dimensional $k$-algebras.  By contrast, the endomorphism rings of finite-dimensional $k$-vector spaces are simple $k$-algebras.  A proposed equivalence would be fully faithful and so preserve the endomorphism rings; that is not possible.  

Regrettably, this point was not properly understood in an earlier draft that saw some circulation (arXiv:1007.4329v1); specifically, Proposition 3.62 and Theorem 1.6 were wrong.
\end{remark}

\subsection{Forgetful functors, versors, and adjunction to modules}
With many questions in linear geometry it can help to forget the perpendicularity relations and focus instead on the spaces themselves.  In a formal context this means we introduce the obvious `forgetful' functors from $\Adj(W)=\Adj({}_R W_S)$ to ${}_R {\tt Mod}$ and to ${\tt Mod}_S$.  One of these two functors is covariant and the other contra-variant and the covariant forgetful functor is a right adjoint to a separate functor we call a `versor'.  If we compose the contra-variant forgetful functor with the transpose we get a similar relationship.  These adjunctions to module categories allow us to explore $\Adj(W)$ efficiently.

For each $(R,S)$-bimap $B:U\times V\to W$, and each adjoint-morphism $\mor{\mu}$ in $\Adj(W)$,  define $\lt{B}=U$ and $\llt{\mor{\mu}}=\cev{\mu}$,
and also $\rrt{B}=V$ and $\rrt{\mor{\mu}}=\vec{\mu}$.  Thus, $\Adj(W)$ possess two forgetful functors and the duality in composition requires that exactly one of these be co-variant.  It is a matter of taste when selecting which of these is co-variant: if one prefers left modules then presumably $\llt{(-)}$ should be viewed as co-variant and $\rrt{(-)}$ as contra-variant -- this is our convention here as demonstrated by \eqref{eq:def-comp}.

\begin{lemma}\label{lem:non-deg}
Fix $(R,S)$-bimaps $B$ and $C$ in $\Adj(W)$ and adjoint-morphisms $\mor{\mu}$ and $\mor{\nu}$ from $B$ to $C$. 
\begin{enumerate}[(i)]
\item If $\cev{\mu}=\cev{\nu}$ then $\vec{\mu}\equiv \vec{\nu}$ modulo the right radical of $B$.  So, if $B$ is right nondegenerate ($\lt{B}^{\bot}=0$) then $\vec{\mu}=\vec{\nu}$ and we say that $\cev{\mu}$ determines $\vec{\mu}$. 
\item If $\vec{\mu}=\vec{\nu}$ then $\cev{\mu}\equiv \cev{\nu}$ module the left radical  of $C$; so, if $C$ is left nondegenerate ($\rt{C}^{\top}=0$) then $\cev{\mu}=\cev{\nu}$ and we say that $\vec{\mu}$ determines $\cev{\mu}$.  
\end{enumerate}
\end{lemma}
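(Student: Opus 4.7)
The statement is essentially a direct unpacking of the defining identity \eqref{def:adjoint} for adjoint-morphisms, so the plan is short.

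For part (i), the starting move is to write out what $\mor{\mu},\mor{\nu}\in \Adj(B,C)$ mean: for every $u\in U$ and every $v'\in V'$,
\[
u\cev{\mu}Cv' \;=\; uB\vec{\mu}v' \quad\text{and}\quad u\cev{\nu}Cv' \;=\; uB\vec{\nu}v'.
\]
Subtracting and using $\cev{\mu}=\cev{\nu}$ gives $uB(\vec{\mu}v'-\vec{\nu}v')=0$ for all $u\in U$. By definition of the right radical, this says $\vec{\mu}v'-\vec{\nu}v'\in \lt{B}^{\bot}$ for each $v'\in V'$, which is exactly $\vec{\mu}\equiv\vec{\nu}$ modulo the right radical of $B$. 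The nondegeneracy hypothesis $\lt{B}^{\bot}=0$ then forces $\vec{\mu}v'=\vec{\nu}v'$ for all $v'$, hence $\vec{\mu}=\vec{\nu}$.

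For part (ii), the cleanest route is simply to apply the transpose duality $t=t({}_R W_S)$ from the preceding subsection. Under $t$, the pair $(B,C)$ becomes $(B^t,C^t)$ in $\Adj({}_{S^{op}}W_{R^{op}})$ and $\mor{\mu}\mapsto(\vec{\mu},\cev{\mu})$; the right radical of $B^t$ is the left radical of $B$, and left nondegeneracy of $C$ corresponds to right nondegeneracy of $C^t$. Thus (ii) is exactly (i) applied to the transposed adjoint-morphisms from $C^t$ to $B^t$, with the roles of $B$ and $C$ (and of $\cev{(-)}$ and $\vec{(-)}$) swapped. Alternatively one can just rerun the one-line computation above on the other side: $u'B\vec{\mu}v = u'B\vec{\nu}v$ becomes $(\cev{\mu}u' - \cev{\nu}u')Cv=0$ for all $v$, putting $\cev{\mu}u'-\cev{\nu}u'$ in $\rt{C}^{\top}$.

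There is no real obstacle here — the only thing to be careful about is keeping straight which radical is which (left radical $\rt{C}^{\top}\subseteq U'$ versus right radical $\lt{B}^{\bot}\subseteq V$) and making sure the quantifiers $\forall u$ and $\forall v'$ range over the full modules $U$ and $V'$ so that the conclusion genuinely lies in the radical. The appeal to transpose in part (ii) is bookkeeping, not substance.
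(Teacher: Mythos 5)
Your proof is correct and follows essentially the same route as the paper: subtract the two instances of the adjoint identity to place $(\vec{\mu}-\vec{\nu})\rt{C}$ inside $\lt{B}^{\bot}$, then invoke the transpose for part (ii). The only nitpick is that in the transpose argument the relevant radical is the right radical of $C^t$ (equal to the left radical of $C$), not of $B^t$, but your direct computation for (ii) covers this anyway.
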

\begin{proof}  
Let $\cev{\mu}=\cev{\nu}$.  For all $u\in \lt{B}$ and all $v'\in \rt{C}$, 
$uB(\vec{\mu}-\vec{\nu})v'=u\cev{\mu}Cv'-u\cev{\nu}Cv'=0$.
Hence, $(\vec{\mu}-\vec{\nu})\rt{C}\leq \lt{B}^{\bot}$.  The transpose of (i) proves (ii).
\end{proof}

There are also some trivial ways to lift a homomorphism from ${}_R{\tt Mod}$, resp. ${\tt Mod}_S$, to $\Adj(W)$ that will be used at times below.
\begin{lemma}\label{lem:kernels}
Let $B$ be an $(R,S)$-bimap, ${}_R X$ an $R$-module, and $Y_S$ an $S$-module.
\begin{enumerate}[(i)]
\item Let $(\cev{\mu},\vec{\mu})\in \hom_R(X,\lt{B})\times \hom_S(\rt{B},Y)$ with $\vec{\mu}$ is an epimorphism.  Then there is an $(R,S)$-bimap $A$ with $\mor{\mu}\in \Adj(A,B)$ if, and only if, $(\im\cev{\mu})\bot_B (\ker\vec{\mu})$.

\item Let $\mor{\mu}\in \hom_R(\lt{B},X)\times \hom_S(Y,\rt{B})$ and $\cev{\mu}$ an
epimorphism.  Then there is an $(R,S)$-bimap $C$ where $\mor{\mu}\in \Adj(B,C)$ if,
and only if, $(\ker \cev{\mu})\bot_B (\im \vec{\mu})$.
\end{enumerate}
\end{lemma}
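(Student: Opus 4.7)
The plan is to build $A$ (resp.\ $C$) directly from the given data and to observe that the orthogonality hypothesis is precisely what makes the obvious definition well defined; each converse is then a one-line invocation of \lemref{lem:perp-morph}.

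For the forward direction of (i), assume $(\im\cev{\mu})\bot_B(\ker\vec{\mu})$. Since $\vec{\mu}:\rt{B}\to Y$ is an epimorphism, each $y\in Y$ admits some $v\in\rt{B}$ with $\vec{\mu}v=y$; set $xAy:=x\cev{\mu}Bv$. The crucial step is well-definedness: a second lift $v'$ satisfies $v-v'\in\ker\vec{\mu}$, so the hypothesis forces $x\cev{\mu}B(v-v')=0$. Biadditivity and the $(R,S)$-associative properties of $A$ then follow from those of $B$ together with the $R$-linearity of $\cev{\mu}$ and the $S$-linearity of $\vec{\mu}$; for instance $xA(ys)=x\cev{\mu}B(vs)=(x\cev{\mu}Bv)s=(xAy)s$ because $\vec{\mu}(vs)=ys$. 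Taking $v$ itself as a lift of $\vec{\mu}v$ yields $xA\vec{\mu}v=x\cev{\mu}Bv$, so $\mor{\mu}\in\Adj(A,B)$.

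For the converse of (i), if $\mor{\mu}\in\Adj(A,B)$ for some bimap $A$, then \lemref{lem:perp-morph} applied with domain bimap $A$ and codomain bimap $B$ delivers $(\im\cev{\mu})\bot_B(\ker\vec{\mu})$ immediately.

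Part (ii) is strictly dual: either repeat the construction by setting $xCy:=uB\vec{\mu}y$ for any $\cev{\mu}$-preimage $u\in\lt{B}$ of $x$ (well defined by $(\ker\cev{\mu})\bot_B(\im\vec{\mu})$, and $\mor{\mu}\in\Adj(B,C)$ by construction), or transport (i) through the transpose duality. The converse is again a direct citation of \lemref{lem:perp-morph}. The only real obstacle in either part is bookkeeping: $\cev{\mu}$ and $\vec{\mu}$ point in opposite directions between (i) and (ii), so the orthogonality hypothesis and the choice of which map is surjective swap accordingly. The substantive content reduces to the observation that orthogonality with respect to $B$ is exactly what is needed for the chosen lifts to produce a well-defined bimap.
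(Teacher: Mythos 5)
Your proposal is correct and follows essentially the same route as the paper: define $A$ by $xAy = x\cev{\mu}Bv$ for a $\vec{\mu}$-lift $v$ of $y$, use the orthogonality hypothesis $(\im\cev{\mu})\bot_B(\ker\vec{\mu})$ exactly for well-definedness, cite \lemref{lem:perp-morph} for the converse, and dispose of (ii) by the transpose. Your explicit verification of biadditivity and the associative-type properties is a small addition the paper leaves implicit, but the argument is the same.
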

\begin{proof}
The forward direction of (i) follows from \lemref{lem:perp-morph}.   Suppose instead that
$\ker\vec{\mu}\leq (\im\cev{\mu})^{\bot}$.  We claim that $A$ is well-defined by the formula $A\vec{\mu}=\cev{\mu}B$.  First, as $\vec{\mu}$ is an epimorphism, for every $x\in X$ and $y\in Y$ there is a $v\in\rt{B}$, $y=\vec{\mu}v$ and so $xAy=xA\vec{\mu}v=x\cev{\mu}Bv$; hence, $A$ is defined on $X\times Y$.  For the well-definedness, take $v'\in \rt{B}$ with $\vec{\mu}v'=y$.  Now $v'-v=z\in \ker\vec{\mu}\leq (\im \cev{\mu})^{\bot}$, therefore,
\begin{align*}
  xAy & = x\cev{\mu}Bv'=x\cev{\mu}Bv+x\cev{\mu}Bz=x\cev{\mu}Bv
\end{align*}
Thus the choice of representative for the pre-image of $y$ is not important.

For (ii) apply the transpose to the result for (i).
\end{proof}

Since we fix the codomain $W$, when we apply the left forgetful functor to a bimap $B:U\times V\to W$, we are forgetting the module $V$ along with the `product' between $U$ and $V$.  So the problem becomes: given $U={}_R U$ and $W={}_R W_S$, recover an $S$-module $U\lversor W$ and an $(R,S)$-bimap $\lversor_R :U\times (U\lversor W)\to W$ through which $B$ is uniquely associated.  The notation $\lversor$ suggests we think of this as a `universal division' of $U$ into $W$:
\begin{align}\label{eq:versor-1}
  	\forall u\in U,\forall w\in W,\forall v\in V, \qquad uBv=w \Rightarrow (v\sigma^B=u\lversor w)\wedge(u\sigma_B=w\rversor v).
\end{align}
There $\sigma^B:V\to U\lversor W$ and $\sigma_B:U\to W\rversor V$ are homomorphisms unique to $B$.  This is analogous to how one may think of tensors, that is, every distributive `product' (bimap) $B:U\times V\to W$  is the image of a `universal product':
\begin{align}
	\forall u\in U,\forall v\in V,\forall w\in W,\qquad uBv=w\Rightarrow (u\otimes v)\pi(B)=w.
\end{align}

Now formally, an $(R,S)$-bimap $\lversor=\lversor_R:U\times (U\lversor_R W)\to W$ is a \emph{left $R$-versor} (respectively a \emph{right $S$-versor} $\rversor=\rversor_S:(W\rversor_S V)\times V\to W$) if for every $(R,S)$-bimap $B:U\times V\to W$, there is a unique homomorphism $\sigma^B:V\to (U\lversor_R W)$ (resp. $\sigma_B:U\to (W\rversor_S V)$) such that corresponding diagram in \eqref{fig:versor} commutes.  These should be compared to our atypical diagram \eqref{fig:tensor} for the tensor.
\begin{equation}\label{fig:versor}
\xymatrix{
{}_R U\times \phantom{a}V \ar@<-3ex>@{=}[d]	\ar@<3ex>[d]^{\sigma^B} \ar[r]^B & W\ar@{=}[d]\\
 {}_R U\times U\lversor W \ar[r]^{\lversor} &  {}_R W
}
\qquad
\xymatrix{
\;\phantom{ab}U\phantom{ab}\times V_S \ar@<-3ex>[d]^{\sigma_B}
		\ar@<4ex>@{=}[d] \ar[r]^B &  W_S\ar@{=}[d]\\
W\rversor V\times V_S \ar[r]^{\rversor} &  W_S
}
\end{equation}
As with tensors, the choice of $U\lversor_R W$ (or $W\rversor_S V$) is unique up to a canonical isomorphism.  The existence of versors is confirmed by taking $U\lversor_R W=\hom_R(U,W)$ (resp. $W\rversor_S V=\hom_S(V,W)$) and defining $u\lversor \tau=u\tau$ (resp. $\tau\rversor v=\tau v$).  In particular, $v\sigma^B=Bv$ (resp. $\sigma_B u=uB$) and is unique because $\lversor$, so defined, is right-nondegenerate (resp. $\rversor$ is left-nondegenerate).  In what follows we sometimes write $U\lversor_R W$ to represent both the bimap as well as the right $S$-module necessary to define the left $R$-versor, much as we do with tensors.

\begin{remark}
The associated \emph{modules} $U\lversor_R W=\hom_S(U,W)$ and $W\rversor_S V=\hom_S(V,W)$ have such established histories that new vocabulary and symbols will be odious to many.  As a modest defense, here we study \emph{bimaps} and care more for the abstraction of `universal division' than any specific construction implementing that property.  Thus, we feel an independent nomenclature is reasonably justified.  For uniformity we harvested a mostly obsolete term `versor' which appears in some of the earliest writings on tensors, e.g. \cite{Ham}*{p. 7}.  We are not aware of any deep connection to the arcane definitions of versors, much like the original intentions for tensors are obscured in the modern treatments.
\end{remark}

As with the tensor, versors induce various functors.  We pick out the functors that lead to bimaps and adjoint-morphisms.  To describe where homomorphisms are sent we prove the following `lifting property'.

\begin{lemma}\label{lem:exists-star}
Fix an $(R,S)$-bimaps $B$.
\begin{enumerate}[(i)]
\item For all $R$-modules ${}_R U$, and all $\alpha\in \hom_R(U,\lt{B})$, there is a unique $\mor{\alpha}\in \Adj(U\lversor_R W,B)$ with $\cev{\alpha}=\alpha$.
\item For all $S$-modules $V_S$, and all $\alpha\in \hom_S(\rt{B},V)$, there is a unique $\mor{\alpha}\in \Adj(B,W\rversor_S V)$ with $\vec{\alpha}=\alpha$.
\end{enumerate}
\end{lemma}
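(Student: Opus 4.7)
The plan is to realize $\mor{\alpha}$ via the universal property of the versor applied to an auxiliary bimap. For part~(i), given $\alpha \in \hom_R(U, \lt{B})$, pre-compose $B$ with $\alpha$ in the first coordinate to form the map $A : U \times \rt{B} \to W$ defined by $uAv' = u\alpha Bv'$. Since $\alpha$ is $R$-linear and $B$ is an $(R,S)$-bimap, $A$ is routinely checked to be an $(R,S)$-bimap. Thus $A$ factors through the left $R$-versor $\lversor_R : U \times (U \lversor_R W) \to W$ via diagram \eqref{fig:versor}: there exists a unique homomorphism $\sigma^A : \rt{B} \to U \lversor_R W$ of right $S$-modules with $u \lversor (\sigma^A v') = uAv' = u\alpha Bv'$ for all $u \in U$ and $v' \in \rt{B}$.

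Setting $\cev{\alpha} := \alpha$ and $\vec{\alpha} := \sigma^A$, the pair $\mor{\alpha}$ lies in $\hom_R(U, \lt{B}) \times \hom_S(\rt{B}, U\lversor_R W)$ and satisfies the adjoint condition
\[
u \cev{\alpha} B v' \;=\; u\alpha B v' \;=\; u \lversor \vec{\alpha} v' \qquad (\forall u\in U, \forall v'\in \rt{B}),
\]
so $\mor{\alpha} \in \Adj(U \lversor_R W, B)$, as required.

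For uniqueness, suppose $\mor{\beta} \in \Adj(U \lversor_R W, B)$ also has $\cev{\beta} = \alpha$. The versor $\lversor_R$ is right nondegenerate (if $u \lversor \tau = u\tau = 0$ for all $u\in U$, then $\tau = 0$ as an element of $\hom_R(U,W)$), so \lemref{lem:non-deg}(i) forces $\vec{\beta} = \vec{\alpha}$. Alternatively, the equation $u \lversor (\vec{\beta} v') = u\alpha B v' = uAv'$ shows $\vec{\beta}$ is another factorization of $A$ through $\lversor_R$, and the universal property of $\sigma^A$ gives $\vec{\beta} = \sigma^A = \vec{\alpha}$.

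Part (ii) follows formally by applying the transpose functor $t({}_R W_S)$ from \S2.2: it interchanges left and right versors (sending $\lversor_R$ to $\rversor_{R^{op}}$), swaps the roles of $\cev{(-)}$ and $\vec{(-)}$, and reverses the direction of adjoint-morphisms, so part~(i) applied over $({}_{S^{op}} W_{R^{op}})$ delivers~(ii). There is no substantive obstacle here; the only care required is tracking the contravariance in the right coordinate in the definition of an adjoint-morphism and confirming that the constructed $\sigma^A$ is indeed an $S$-module map, both of which follow at once from the $S$-bilinearity of $B$.
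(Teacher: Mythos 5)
Your construction is exactly the paper's: you form the bimap $C=\cev{\alpha}B$ on $U\times\rt{B}$ (your $A$), factor it through the left versor to obtain $\vec{\alpha}=\sigma^{C}$, and get uniqueness from the right-nondegeneracy of $\lversor_R$ via \lemref{lem:non-deg}(i), with (ii) following by the transpose (the paper writes the mirror construction $D=B\alpha$ directly, a cosmetic difference). The proposal is correct and matches the paper's argument.
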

\begin{proof}
For (i), set $\cev{\alpha}=\alpha$ and $C:U\times V'\to W$ to be $C=\cev{\alpha}B$.  Hence there is a unique $\sigma^{C}:V'\to U\lversor_R W$ where for all $u\in U$ and all $v'\in V'$, $u\cev{\alpha}Bv=uCv'=u\lversor (\sigma^{C} v')$.  For (ii), let $D=B\alpha$ and $\mor{\alpha}=(\sigma_D,\alpha)$.
\end{proof}

\begin{remark}
With the suggested choice of $\lversor:U\times \hom_R(U,W)\to W$, $u\lversor\pi= u\pi$, the (left) lifting of $\alpha$ to $\mor{\alpha}$ is simply $\cev{\alpha}=\alpha$ and $\vec{\alpha}=\cev{\alpha}B$.
\end{remark}

For a fixed $(R,S)$-bimodule $W$, $(-)\lversor_R W$ determines a functor ${}_R {\tt Mod}\to\Adj(W)$ sending $U$ to $\lversor:U\times (U\lversor W)\to W$ and assigning to each $\mu\in\hom_R(U,U')$ the unique adjoint-morphism $\mu\lversor W=\mor{\mu}\in \Adj(U\lversor W,U'\lversor W)$ with $\cev{\mu}=\mu$ guaranteed by \lemref{lem:exists-star}(i).  For $\nu\in\hom_R(U',U'')$, $\cev{\mu}\cev{\nu}=\overleftarrow{\mu\nu}$.  As left versors are right-nondegenerate it follows that  $\vec{\nu}\vec{\mu}=\overrightarrow{\mu\nu}$.  Thus, 
\begin{align*}
	(\mu\nu)\lversor W & = (\overleftarrow{\mu\nu},\overrightarrow{\mu\nu}) = (\cev{\mu}\cev{\nu},\vec{\nu}\vec{\mu})
	= (\mu\lversor W)(\nu\lversor W)
\end{align*}
which confirms the only material step in proving that $(-)\lversor_R W$ is a (covariant) functor.   Meanwhile, $W\rversor_S (-)$ is a (contra-variant) functor ${\tt Mod}_S\to \Adj(W)$.  

\begin{thm}\label{thm:adjoint-functor}
For a fixed $(R,S)$-bimodule $W$, the functor $(-)\lversor_R W$ from ${}_R{\tt Mod}$ to $\Adj({}_R W_S)$ is a left adjoint with the left forgetful functor $\llt{(-)}$ its right adjoint. 
\end{thm}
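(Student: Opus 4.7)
The plan is to read the adjunction straight off \lemref{lem:exists-star}(i), which already encodes the universal property we need. For fixed $R$-module $U$ and $(R,S)$-bimap $B$ in $\Adj(W)$ I would define
\begin{equation*}
\Phi_{U,B}\colon \Adj(U\lversor_R W,B) \longrightarrow \hom_R(U,\llt{B}), \qquad \mor{\mu} \longmapsto \cev{\mu}.
\end{equation*}
The target is $\hom_R(U,\lt{B})$ since $\llt{B}=\lt{B}$, and $\Phi_{U,B}$ is visibly well-defined. \lemref{lem:exists-star}(i) asserts both the existence and uniqueness of a lift $\mor{\alpha}$ of any $\alpha\in\hom_R(U,\lt{B})$, which is precisely the surjectivity and injectivity of $\Phi_{U,B}$. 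So $\Phi_{U,B}$ is a bijection.

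Next I would verify naturality in each variable. For naturality in $U$, take $\phi\in \hom_R(U,U')$. By construction of the functor $(-)\lversor_R W$, the morphism $\phi\lversor_R W=\mor{\phi}$ has $\cev{\phi}=\phi$. Given $\mor{\mu}\in\Adj(U'\lversor_R W,B)$, composition rule \eqref{eq:def-comp} yields $\cev{(\mor{\phi}\,\mor{\mu})}=\cev{\phi}\cev{\mu}=\phi\cev{\mu}$, which is exactly the effect of pre-composing $\Phi_{U',B}(\mor{\mu})$ with $\phi$. For naturality in $B$, take $\mor{\nu}\in\Adj(B,C)$; the forgetful functor sends it to $\llt{\mor{\nu}}=\cev{\nu}\in\hom_R(\lt{B},\lt{C})$. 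Again by \eqref{eq:def-comp}, $\cev{(\mor{\mu}\,\mor{\nu})}=\cev{\mu}\cev{\nu}$, matching post-composition of $\Phi_{U,B}(\mor{\mu})$ by $\llt{\mor{\nu}}$. This settles the bi-naturality of $\Phi$ and therefore the adjunction.

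There is essentially no obstacle of substance: the lifting property in \lemref{lem:exists-star}(i) is the universal property of the versor stated in exactly the form needed, and the side of the adjunction (left, not right) is dictated by the fact that $(-)\lversor_R W$ appears in the \emph{first} argument of $\Adj(-,-)$. The only step that requires any care is bookkeeping in the composition law \eqref{eq:def-comp}, where the reversal $\vec{\nu}\vec{\mu}$ in the right component could confuse signs of variance; but since $\Phi$ extracts only the left component $\cev{\mu}$, every composite on the $\Adj$-side translates to ordinary forward composition of module maps on the $\hom_R$-side, keeping both functorialities strictly covariant as desired. If preferred, one can equivalently describe the adjunction by its unit $\eta_U=1_U\colon U\to \llt{(U\lversor_R W)}$ and its counit $\epsilon_B\in\Adj(\lt{B}\lversor_R W,B)$ with $\cev{\epsilon_B}=1_{\lt{B}}$ (obtained from \lemref{lem:exists-star}(i) applied to $\alpha=1_{\lt{B}}$), and check the two triangle identities, but the bijection approach above seems the most direct.
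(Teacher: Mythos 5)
Your proof is correct and follows essentially the same route as the paper: the same hom-set bijection $\mor{\tau}\mapsto\cev{\tau}$, with bijectivity from \lemref{lem:exists-star}(i) (the paper cites \lemref{lem:non-deg}(i) explicitly for injectivity, which is where the uniqueness in that lemma comes from) and the same naturality computation via the composition law \eqref{eq:def-comp}, merely split into the two variables rather than done in one line.
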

\begin{proof}
Fix a left $R$-module $U$ and an $(R,S)$-bimap $B$ into $W$.  Define $\Phi_{U,B}:\Adj(U\lversor_R W,B)\to \hom_R(U,\lt{B})$ by $\mor{\tau}\Phi_{U,B}=\cev{\tau}$.  By \lemref{lem:exists-star}, $\Phi_{U,B}$ is surjective and by \lemref{lem:non-deg}(i) $\Phi_{U,B}$ is injective.  Now let $C$ be a bimap and $U'$ an $R$-module.  Fix $\mor{\mu}\in \Adj(B,C)$ and $\rho\in \hom_R(U',U)$.  Let $\mor{\rho}=\rho\lversor_R W$ (so $\cev{\rho}=\rho$).  For each $\mor{\tau}\in \Adj(U\lversor_R W,B)$, 
\begin{align*}
	\mor{\tau}\hom(\rho\lversor_R W,\mor{\mu})\cdot \Phi_{U',C} & = 
		\mor{\rho}\mor{\tau}\mor{\mu}\Phi_{U',C} 
		= \cev{\rho}\cev{\tau}\cev{\mu} \\ 
		& = \cev{\tau}\hom(\rho,\cev{\mu}) 
		 = \mor{\tau}\Phi_{U,B}\cdot \hom(\rho,\llt{\mor{\mu}}).
\end{align*}
Thus, $\hom(\rho\lversor_R W,\mor{\mu})\cdot \Phi_{U',C}=\Phi_{U,B}\cdot \hom(\rho,\llt{\mor{\mu}})$ so $\Phi_{U,B}$ is natural in $U$ and $B$.  Hence, $(-)\lversor_R W$ is a left adjoint to $\llt{(-)}$.
\end{proof}

\begin{coro}
The left forgetful functor is continuous (preserves limits).  The right forgetful functor sends limits to colimits.
\end{coro}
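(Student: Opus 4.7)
The first assertion is Theorem~\ref{thm:adjoint-functor} combined with the general fact that right adjoints preserve limits.

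For the second assertion, I would route through the transpose duality of Section~\ref{sec:Galois}. The transpose $t$ is a contravariant equivalence $\Adj({}_R W_S)\to\Adj({}_{S^{op}}W_{R^{op}})$, so it sends limits to colimits; and, identifying ${}_{S^{op}}{\tt Mod}$ with ${\tt Mod}_S$, the right forgetful on $\Adj({}_R W_S)$ coincides with $\llt{(-)}\circ t$, where the inner $\llt{(-)}$ is the left forgetful of the transposed category. Thus for a limit $L$ in $\Adj({}_R W_S)$, $t(L)$ is a colimit in the transposed category and $\rrt{L}=\llt{t(L)}$. The claim therefore reduces to showing that the left forgetful also preserves colimits.

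To obtain this missing cocontinuity, I would exhibit a right adjoint for $\llt{(-)}$. Define $K:{}_R{\tt Mod}\to\Adj(W)$ by sending $U$ to the zero bimap $U\times 0\to W$. For any bimap $B$, a morphism $\mor{\mu}\in\Adj(B,K(U))$ is completely determined by $\cev{\mu}:\llt{B}\to U$: the right component $\vec{\mu}$ is forced to be the zero map, and the adjoint-condition $\cev{\mu}K(U)=B\vec{\mu}$ collapses trivially since $K(U)$ is identically zero. The resulting natural bijection $\Adj(B,K(U))\cong\hom_R(\llt{B},U)$ gives an adjunction $\llt{(-)}\dashv K$, so $\llt{(-)}$ is a left adjoint in addition to being a right adjoint and hence is cocontinuous. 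The same construction applies to the transposed category, and we conclude that $\rrt{(-)}=\llt{(-)}\circ t$ sends each limit to a colimit.

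The chief obstacle, in my view, is noticing this very simple right adjoint $K$: the zero bimap is unassuming, but it is precisely what closes the loop between the two adjunctions and the transpose duality. Once it is in hand the rest is formal.
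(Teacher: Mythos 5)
Your proof is correct, and for the first assertion it is exactly the paper's argument: $\llt{(-)}$ is a right adjoint by \thmref{thm:adjoint-functor}, hence continuous. For the second assertion you and the paper share the same skeleton --- factor $\rrt{(-)}$ as $\llt{(-)}\circ t$ through the transpose duality, which sends limits to colimits --- but you correctly notice that this reduction still requires the left forgetful functor to preserve \emph{colimits}, which does not follow from its being a right adjoint. The paper's one-line proof leaves that step unstated; one is presumably meant to read cocontinuity off the explicit constructions (the left components of $\otop\mathcal{B}$ and of cokernels are the corresponding module coproducts and cokernels). Your contribution is to make this step formal by exhibiting a right adjoint $K:U\mapsto (U\times 0\to W)$ to $\llt{(-)}$: since the right module of $K(U)$ is zero, the compatibility condition \eqref{def:adjoint} is vacuous and $\vec{\mu}$ is forced to vanish, giving the natural bijection $\Adj(B,K(U))\cong \hom_R\big(\llt{B},U\big)$. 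Thus $\llt{(-)}$ is simultaneously a left and a right adjoint, hence continuous and cocontinuous, and composing with the duality $t$ finishes the claim. This is a clean, self-contained argument that patches a suppressed step (arguably a small gap) in the paper's proof; the only cost is introducing the auxiliary functor $K$, which the paper avoids by implicitly relying on its concrete descriptions of colimits.
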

\begin{proof} The first part is true of all right-adjoint functors \cite[p. 91]{Pareigis}. The second part follows as $\rrt{(-)}$ is naturally isomorphic to $\llt{(-)^{t}}$ (via the isomorphism of ${\tt Mod}_{S}$ to ${}_{S^{op}}{\tt Mod}$).  Observe also that the transpose sends limits to colimits. 
\end{proof}

\subsection{Adjoints categories are (co-)complete and abelian}\label{sec:prod}
There are several ways to imagine products, coproducts, equalizers, kernels, cokernels, images, etc. for $\Adj(W)$.  To get a head start we use the properties of adjoint functors to provide a hint about the construction on purely formal terms. Sometimes these clues are enough; however, a purely abstract construction will miss the relevance to applications.  So we motivate the construction using the geometric aims.  

First we plan a product for $\Adj(W)$ which mimics the perpendicular sum of 1-spaces used to decompose classical orthogonal and unitary geometries.  The members will be arbitrary modules and not simply $1$-spaces.  Such decompositions appear, for example, in \cite{Wilson:unique-cent}.

Fix  a multiset $\mathcal{B}$ of $(R,S)$-bimaps into $W$. As the left forgetful functor is continuous, it preserves products.  Therefore, a hypothetical product $\obot\mathcal{B}$ for $\mathcal{B}$ in $\Adj(W)$ has $\llt{\obot\mathcal{B}}$ canonically isomorphic to $\prod\cev{\mathcal{B}}$ where $\left\langle \prod\cev{\mathcal{B}}: \{\pi_B:B\in\mathcal{B}\}\right\rangle$ is the usual product in ${}_R {\tt Mod}$. For the right side we work contra-variantly and so we select $\rrt{\obot\mathcal{B}}=\coprod\rt{\mathcal{B}}$ where $\left\langle\coprod\vec{\mathcal{B}},\{\iota_B:B\in\mathcal{B}\}\right\rangle$ is the usual coproduct in ${\tt Mod}_S$.   Combine these constructs into an $(R,S)$-bimap $\obot\mathcal{B}:\prod\cev{\mathcal{B}} \times \coprod\vec{\mathcal{B}} \to W$
following the geometric goal that for every $B\in\mathcal{B}$, $\lt{B}$ must be perpendicular to every member of $\rt{\mathcal{B}}-\{\rt{B}\}$ and $\rt{B}$ should be perpendicular to every member of $\lt{\mathcal{B}}-\{\lt{B}\}$.  This is achieved by the rule:
\begin{align}\label{eq:sum}
		u(\obot\mathcal{B})v
			& = \sum_{B\in\mathcal{B}} u_B B v_B
\end{align}
where $u=(u_B)_{B\in\mathcal{B}}\in\prod\lt{\mathcal{B}}$ and $v=(v_B)_{B\in\mathcal{B}}\in\coprod\rt{\mathcal{B}} $.  (This is a finite sum as we assume $\coprod\rt{\mathcal{B}}$ consist of tuples of finite support.)  
\begin{remark}
If we think of bimaps $B$ and $C$ into $W$ temporarily as matrices with entries in $W$ (cf.  \remref{rem:matrices}), then $B\obot C$ is represented by the matrix $B\oplus C=\left[\begin{smallmatrix} B & 0 \\ 0 & C \end{smallmatrix}\right]$.\footnote{We resist writing this product with $\oplus$ since the product in the category of homotopisms on bimaps has a superseding requirement that $B:U\times V\to W$ and $C:X\times Y\to Z$ have a product $B\oplus C: (U\oplus X)\times (V\oplus Y)\to (W\oplus X)$.}
\end{remark}

Now fix an $(R,S)$-bimap $C$ and a function $\mathcal{B}\to \Adj(C,B)$, $B\mapsto (\lt{\mu}_B, \rt{\mu}_B)$. 
For each $B\in \mathcal{B}$, this give the commutative diagrams in \eqref{eq:prod-1}.
\begin{equation}\label{eq:prod-1}
\xymatrix{\prod\cev{\mathcal{B}}\ar[rr]^{\pi_B} & & \lt{B} &  \textnormal{ and }& \coprod\vec{\mathcal{B}} \ar[dr]_{\vec{\mu}} & & \rt{B}\ar[ll]_{\iota_B}\ar[dl]^{\vec{\mu}_B}\\
 &  \cev{C}\ar[ul]^{\cev{\mu}}\ar[ur]_{\cev{\mu}_B} & & & & \vec{C}. }
\end{equation}
For each $B\in \mathcal{B}$, $(\pi_B,\iota_B)\in \Adj(\obot\mathcal{B}, B)$,
$\mor{\mu}\in \Adj(C,\obot\mathcal{B})$, and $\mor{\mu}(\pi_B,\iota_B)=\mor{\mu_B}$.  
So $\obot$ is a categorical product.   The transpose produces a coproduct $\otop\mathcal{B}=(\obot(\mathcal{B}^t))^t:\coprod\lt{\mathcal{B}}\times \prod\rt{\mathcal{B}}\to W$.  We also recognize that the finite products and coproducts in $\Adj(W)$ are isomorphic. 

\begin{thm}\label{thm:Adj-add}
$\Adj(W)$ is an additive category.
\end{thm}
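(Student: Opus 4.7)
My plan is to verify the three requirements for an additive category: preadditivity, existence of a zero object, and existence of finite biproducts. The first is essentially already recorded: at the opening of Section~\ref{sec:Galois} the coordinatewise sum $(\cev{\mu}+\cev{\nu},\vec{\mu}+\vec{\nu})$ of two parallel adjoint-morphisms is seen to be an adjoint-morphism, and composition~\eqref{eq:def-comp} inherits bilinearity from ordinary composition of module homomorphisms; so the hom-sets are abelian groups and composition is bilinear.

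For a zero object I would take the bimap $\mathbf{0}:0\times 0\to W$. For any $(R,S)$-bimap $B:U\times V\to W$ there is exactly one pair of module homomorphisms $(U\to 0,\,0\to V)$, namely the zero pair, and the adjoint relation $\cev{\mu}\,\mathbf{0}=B\vec{\mu}$ holds vacuously because both sides evaluate to $0$. Hence $\mathbf{0}$ is terminal; applying the transpose duality then shows it is also initial, so $\mathbf{0}$ is a zero object.

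For finite biproducts I would appeal to the constructions $\obot\mathcal{B}$ of this section: for finite $\mathcal{B}$ the universal property already established makes $\obot\mathcal{B}$ a product, and the standard fact that in any preadditive category with a zero object a finite product is automatically a coproduct promotes it to a biproduct, with the coprojections furnished by the right-hand module injections built into~\eqref{eq:sum}. Alternatively one can compare $\obot\mathcal{B}$ and $\otop\mathcal{B}$ directly: in the finite case the canonical identifications $\prod\cong\coprod$ in ${}_R{\tt Mod}$ and ${\tt Mod}_S$ make the underlying modules of the two constructions agree, and unwinding the two transposes in the definition of $\otop$ returns the same bilinear formula $\sum_i u_iB_iv_i$.

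The step that I expect to occupy most of the work along the second route is matching the product structure maps of $\obot\mathcal{B}$ with the coproduct structure maps of $\otop\mathcal{B}$ under these canonical isomorphisms; invoking the standard preadditive-category lemma bypasses this bookkeeping entirely and is probably the cleanest presentation.
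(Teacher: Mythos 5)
Your proposal is correct and follows essentially the same route as the paper's own proof: preadditivity from the sum of adjoint-morphisms noted in Section~\ref{sec:Galois}, the zero object $0\times 0\to W$, and finite biproducts obtained from the product $\obot\mathcal{B}$ together with the observation that finite products and coproducts coincide. The extra care you take in verifying the zero object and in justifying product $=$ coproduct is sound but matches the paper's argument in substance.
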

\begin{proof}
We introduced $\Adj(W)$ as a pre-additive category.  
The zero for $\Adj(W)$ is $0:0\times 0\to W$.  The existence of products and coproducts (that are isomorphic when finite) completes the proof.
\end{proof}
 
We appeal once more to \thmref{thm:adjoint-functor} to give the components of an equalizer and finish off the definition with a geometric observations. 

Because $\Adj(W)$ is additive, a hypothetical equalizer of a pair $\mor{\mu},\mor{\nu}\in \Adj(B,C)$ is the kernel of $\mor{\tau}=\mor{\mu}-\mor{\nu}\in \Adj(B,C)$.  So we actually construct kernels for arbitrary adjoint-morphisms $\mor{\tau}$.  Using continuity of the left forgetful functor, a proposed kernel $\ker\mor{\tau}=\langle D, (\iota,\pi)\rangle$ could take the form $D:\ker\cev{\tau}\times (\rt{C}/\im\vec{\tau})\to W$ and $\iota:\ker\cev{\tau}\to \lt{B}$ and $\pi:\rt{B}\to (\rt{B}/\im\vec{\tau})$ are the associated kernels and cokernels of modules.  Define $D$ by the restriction of $B$:
\begin{align}
	uD(v\pi) & = uD(\im\vec{\tau}+ v)=uBv=u\iota B v & (\forall u\in \ker\cev{\tau},\forall v\in \rt{B}).
\end{align}
However, one should pause to understand that this is well-defined -- a fact that follows instantly from the well used geometric property of adjoints: kernels are perpendicular to images (\lemref{lem:perp-morph}).

Having completed some universal constructions we could stop here and declare that other interesting constructions follow similarly.  A less dubious alternative uses the ingredients here to show $\Adj(W)$ is closed to all limits (products, kernels, pullbacks, images, etc.) and (by the transpose) also to colimits (coproducts, cokernels, pushouts, coimages, etc.).  So we prove:
\begin{thm}\label{thm:abelian}
$\Adj(W)$ is complete and cocomplete abelian category.  
\end{thm}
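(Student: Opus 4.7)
My plan is to assemble the remaining universal constructions from the ingredients already at hand and then verify the coimage-image isomorphism by direct computation. For completeness, \thmref{thm:Adj-add} supplies arbitrary products $\obot$, and the paragraph preceding this theorem constructs a kernel for any adjoint-morphism; since $\Adj(W)$ is additive, equalizers are kernels of differences, so all equalizers exist. The standard fact (\cite{Pareigis}*{Section 1.11}) that a category with all small products and all equalizers has all small limits then shows $\Adj(W)$ is complete.

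I would obtain cocompleteness by transpose duality. The functor $t({}_R W_S)$ realizes $\Adj({}_R W_S)^{op}$ as $\Adj({}_{S^{op}}W_{R^{op}})$, so the completeness argument applied to the latter yields every small colimit in $\Adj(W)$. Concretely, $\otop$ delivers coproducts and the transpose of the kernel construction produces, for each $\mor{\mu}:B\to C$, a cokernel bimap on $(\lt{C}/\im\cev{\mu})\times \ker\vec{\mu}$ together with the canonical arrow out of $C$.

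To verify the abelian axiom I would compute the coimage and image of an arbitrary $\mor{\mu}:B\to C$ explicitly. Applying the cokernel construction to $\ker\mor{\mu}$ produces the coimage bimap $(\lt{B}/\ker\cev{\mu})\times \im\vec{\mu}\to W$, and applying the kernel construction to the cokernel of $\mor{\mu}$ produces the image bimap $\im\cev{\mu}\times (\rt{C}/\ker\vec{\mu})\to W$; both are obtained by restriction-then-quotient of $B$ or $C$. The canonical arrow from coimage to image in $\Adj(W)$ has left component the first-isomorphism-theorem map $\lt{B}/\ker\cev{\mu}\cong \im\cev{\mu}$ induced by $\cev{\mu}$, and right component the dual map $\rt{C}/\ker\vec{\mu}\cong \im\vec{\mu}$ induced by $\vec{\mu}$. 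Both components are module isomorphisms, and the adjoint-compatibility of the pair is inherited from the adjoint identity for $\mor{\mu}$ itself. Hence the comparison arrow is an isomorphism in $\Adj(W)$, which makes the category abelian.

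The main obstacle I anticipate is not any single deep argument but the careful bookkeeping forced by the twisted composition rule of $\Adj(W)$, in which left components compose covariantly while right components compose contravariantly. As a result, each kernel cuts down on the left and quotients on the right while each cokernel does the reverse, and one must repeatedly invoke \lemref{lem:perp-morph} to confirm that the restricted-and-quotiented bimaps truly satisfy the adjoint-morphism identity.
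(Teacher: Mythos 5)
Your proposal is correct, and the completeness and cocompleteness parts coincide with the paper's argument: arbitrary products plus equalizers (kernels of differences, using additivity) give completeness, and the transpose duality transports this to cocompleteness. Where you diverge is in the verification of the abelian axiom. The paper uses the normality formulation -- additive, with kernels and cokernels, such that every monomorphism is a kernel (of its cokernel) and, by transpose, every epimorphism is a cokernel -- whereas you use the equivalent exactness formulation, computing the coimage $(\lt{B}/\ker\cev{\mu})\times \im\vec{\mu}\to W$ and the image $\im\cev{\mu}\times(\rt{C}/\ker\vec{\mu})\to W$ explicitly and checking that the comparison arrow, whose components are the two first-isomorphism-theorem maps, is an adjoint-isomorphism. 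Your computation is right: the adjoint identity for the comparison pair reduces on both sides to $u\cev{\mu}Cv'=uB\vec{\mu}v'$, which is the identity for $\mor{\mu}$ itself, and a pair of module isomorphisms is an adjoint-isomorphism by \lemref{lem:epic-monic}(iii). What your route buys is an explicit description of images and coimages in $\Adj(W)$ (which the paper leaves implicit but uses later, e.g.\ in \secref{sec:nondeg}), at the cost of slightly more bookkeeping; the paper's route is shorter because the characterization of monomorphisms and epimorphisms in \lemref{lem:epic-monic} makes ``a mono is the kernel of its cokernel'' an immediate inspection of the kernel construction. The one point you should make explicit rather than assert is that the arrow you write down is indeed the \emph{canonical} factorization arrow; this follows because the left forgetful functor, being a right adjoint, identifies the left components of kernels in $\Adj(W)$ with kernels of modules, so the induced map is forced componentwise.
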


\begin{proof}[Proof of \thmref{thm:abelian}]
Since $\Adj(W)$ has arbitrary (co-)products and (co-)equalizers it is a (co-)complete category \cite[p. 85]{Pareigis}.  As $\Adj(W)$ is additive (\thmref{thm:Adj-add}) and has kernels and cokernels, it remains only to show that every monomorphism $\mor{\mu}\in\Adj(B,C)$ is kernel and every epimorphism is a cokernel.  From the construction above we notice a monomorphism is the kernel of its cokernel and the transpose completes the proof.
\end{proof}

\subsection{Projective bimaps}
We end our brief list of properties for $\Adj(W)$ by describing projectives.  In particular it is possible to define (co-)homology for bimaps.  Thus far the constructions in $\Adj(W)$ took the sensible route of pairing up a well-known construction in the left with its dual in the right.  This is impossible for projectives as demonstrated by the next example.
\begin{ex}\label{ex:proj}
If $P$ is a finitely generated projective abelian group, $Q$ is a finitely generated injective abelian group, $W$ is a torsion group, and $B:P\times Q\to W$ is a $\mathbb{Z}$-bimap, then $B$ is everywhere zero.
\end{ex}
\begin{proof}
As $P$ and $Q$ are finitely generated, $P$ is free and $Q$ is divisible.  Hence, $B$ factors through $P\times Q\to (P\otimes Q)\cong Q^m$.  The homomorphism $\pi(B):Q^m\to W$ must be trivial as $Q^m$ is divisible and $W$ is a torsion module.
\end{proof}

Since projectives are not limits (or colimits), the continuity of the forgetful functors is not applicable as it was above.
In this way, \exref{ex:proj} does not prevent the development of projectives, though it does change the approach.  Ultimately projective bimaps in $\Adj(W)$ are obtained through versors, but that construction has the benefit of hindsight.  Indeed, the abstraction of versors grew out of the characterization of projective bimaps as evident in \thmref{thm:projectives}.  To prove that theorem we first characterize monomorphisms and epimorphisms in $\Adj(W)$.

\begin{lemma}\label{lem:epic-monic}
Let $\mor{\mu}$ be an adjoint-morphism.  The following hold.
\begin{enumerate}[(i)]
\item $\mor{\mu}$ is monic if, and only if, $\cev{\mu}$ is monic and $\vec{\mu}$ is epic.
\item $\mor{\mu}$ is epic if, and only if, $\cev{\mu}$ is epic and $\vec{\mu}$ is monic.
\item $\mor{\mu}$ is an adjoint-isomorphism if, and only if, 
$\cev{\mu}$ and $\vec{\mu}$ are isomorphisms.
\end{enumerate}
\end{lemma}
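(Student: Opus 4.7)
The plan is to reduce each equivalence to module-level statements using three tools: the composition formula~\eqref{eq:def-comp}, the explicit kernel construction from Section~\ref{sec:prod} together with \thmref{thm:abelian}, and the transpose duality.

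For the forward direction of (i), since $\Adj(W)$ is abelian, $\mor{\mu}$ being monic is equivalent to $\ker\mor{\mu}=0$. The kernel of $\mor{\mu}\in\Adj(B,C)$ constructed before \thmref{thm:abelian} is the restriction bimap on $\ker\cev{\mu}\times(\rt{B}/\im\vec{\mu})$, and the zero object in $\Adj(W)$ is $0:0\times 0\to W$; hence $\ker\mor{\mu}=0$ forces both of these modules to be trivial, meaning $\cev{\mu}$ is injective and $\vec{\mu}$ is surjective. (Equivalently, one can invoke the corollary after \thmref{thm:adjoint-functor}: $\llt{(-)}$ preserves kernels and $\rrt{(-)}$ sends kernels to cokernels, so $\ker\mor{\mu}=0$ transfers directly to the two module conditions.) For the converse, assume $\cev{\mu}$ is monic and $\vec{\mu}$ is epic, and suppose $\mor{\alpha}\mor{\mu}=\mor{\beta}\mor{\mu}$ for some $\mor{\alpha},\mor{\beta}\in \Adj(A,B)$. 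The composition formula separates this equation into $\cev{\alpha}\cev{\mu}=\cev{\beta}\cev{\mu}$ and $\vec{\mu}\vec{\alpha}=\vec{\mu}\vec{\beta}$; right-cancelling the monic $\cev{\mu}$ and left-cancelling the epic $\vec{\mu}$ (in the paper's composition order for right modules) yields $\cev{\alpha}=\cev{\beta}$ and $\vec{\alpha}=\vec{\beta}$, so $\mor{\alpha}=\mor{\beta}$.

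Part (ii) is then immediate from (i) by the transpose duality: applying $t$ sends $\mor{\mu}$ to $(\vec{\mu},\cev{\mu})\in \Adj({}_{S^{op}}W_{R^{op}})$ and, being a contravariant equivalence, swaps monomorphisms and epimorphisms, so (i) applied to the transpose is exactly (ii) for $\mor{\mu}$. For (iii), an isomorphism is both monic and epic, so combining (i) and (ii) forces each of $\cev{\mu}$ and $\vec{\mu}$ to be both monic and epic in its module category, hence a module isomorphism. Conversely, if $\cev{\mu}$ and $\vec{\mu}$ are module isomorphisms then $(\cev{\mu}^{-1},\vec{\mu}^{-1})$ satisfies the adjoint identity (which one inherits from that of $\mor{\mu}$ by substituting $u\cev{\mu}^{-1}$ for $u$ and $\vec{\mu}^{-1}v$ for $v$) and composes with $\mor{\mu}$ to give identities on both $B$ and $C$.

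The arguments are largely formal, so the only real obstacle is bookkeeping: one must track which side of each composite to cancel on, because the left and right components compose in opposite categorical orders under \eqref{eq:def-comp}. Once that is straightened out, nothing beyond the abelian structure and the definitions is needed.
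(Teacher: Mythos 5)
Your argument is correct, but the forward direction of (i) takes a genuinely different route from the paper's. The paper tests monomorphy directly against hand-built objects: it lifts competing maps $\nu,\pi:U\to\lt{B}$ to adjoint-morphisms out of the versor $U\lversor_R W$ (\lemref{lem:exists-star} plus right-nondegeneracy of the versor) to force $\cev{\mu}$ monic, and uses the trivial bimap $0\times V\to W$ as a test object to force $\vec{\mu}$ epic; this is elementary and independent of the limit constructions. You instead read the answer off the explicit kernel $\ker\cev{\mu}\times(\rt{B}/\im\vec{\mu})\to W$ together with the criterion that a monomorphism has zero kernel. That is slicker, but you should be careful about the citation of \thmref{thm:abelian}: verifying that every monomorphism in $\Adj(W)$ is the kernel of its cokernel (the substance of that theorem) essentially presupposes the description of monomorphisms you are proving, so leaning on the full abelian structure risks circularity. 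Fortunately you only need the trivial implication ``monic $\Rightarrow$ kernel object is zero,'' which holds in any category with a zero object and kernels (the kernel morphism is monic and composes with $\mor{\mu}$ to zero, hence is zero, hence its domain is a zero object), and both ingredients are established before this lemma; stating that explicitly would make the argument airtight. Your converse (cancellation of the injective $\cev{\mu}$ and surjective $\vec{\mu}$ through \eqref{eq:def-comp}), the transpose reduction for (ii), and the explicit inverse $(\cev{\mu}^{-1},\vec{\mu}^{-1})$ for (iii) match the paper's reasoning, with (iii) done in slightly more detail than the paper's one-line ``(i) and (ii) imply (iii).''
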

\begin{proof}
Consider (i) in the forward direction.  Let $\mor{\mu}\in \Adj(B,C)$ be monic.  

Let $U$ be an arbitrary left $R$-module and $\nu,\pi\in \hom_R(U,\lt{B})$ such that $\nu\cev{\mu}=\pi\cev{\mu}$.  By \lemref{lem:exists-star}, there are $\mor{\nu},\mor{\pi}\in \Adj(U\lversor_R W,B)$ with $\nu=\cev{\nu}$ and $\pi=\cev{\pi}$. Since $\cev{\nu}\cev{\mu}=\cev{\pi}\cev{\mu}$ and $U\lversor_R W$ is right nondegenerate, by \lemref{lem:non-deg}(i), $(\cev{\nu}\cev{\mu},\vec{\mu}\vec{\nu})=(\cev{\pi}\cev{\mu},\vec{\mu}\vec{\pi})$ and so $\mor{\nu}\mor{\mu}=\mor{\pi}\mor{\mu}$.  By assumption $\mor{\mu}$ is monic so $\mor{\nu}=\mor{\pi}$ and so $\cev{\nu}=\cev{\pi}$.  Therefore, $\nu\cev{\mu}=\pi\cev{\mu}$ implies $\nu=\pi$ and so $\cev{\mu}$ is monic. 

To show $\vec{\mu}$ is epic, suppose $V$ is a right $S$-module and $\vec{\nu},\vec{\pi}\in\hom_R(\rt{B},V)$ such that $\vec{\mu}\vec{\nu}=\vec{\mu}\vec{\pi}$.  Let $A$ be the trivial bimap $0\times V\to W$.  For all $y\in \rt{B}$, $0By=0=0A\vec{\nu}y=0A\vec{\pi}y$; so, $(0,\vec{\nu}),(0,\vec{\pi})\in \Adj(A,B)$.  Now $(0,\vec{\nu})\mor{\mu}=(0,\vec{\mu}\vec{\nu})=(0,\vec{\mu}\vec{\pi})=(0,\vec{\pi})\mor{\mu}$. As $\mor{\mu}$ is monic, $(0,\vec{\nu})=(0,\vec{\pi})$; so, $\vec{\nu}=\vec{\pi}$ and $\vec{\mu}$ is epic.

For the converse, we know $\cev{\mu}$ and $\vec{\mu}$ have dual cancellation properties and so $\mor{\mu}$ has the cancellation property of $\cev{\mu}$, i.e. $\mor{\mu}$ is monic.  This prove (i).

Part (ii) follows from the transpose applied to (i) and (i) and (ii) imply (iii).
\end{proof}

We encountered the categorical product in $\Adj(W)$ as a bimap which is a module product on the left and a module coproduct on the right.  However, bimaps offer the flexibility of using universal constructions on just one side.  The \emph{left semi-product} of bimaps $B$ and $C$ where  $\lt{B}=\lt{C}$ is $[B,C]:\lt{B}\times (\rt{B}\oplus \rt{C})\to W$ with
\begin{align}
	u[B,C](v\oplus v') & = uBv+uCv' & (\forall u\in \lt{B}, \forall v\in \rt{B},\forall v'\in \rt{C}).
\end{align}
Intuitively we think of $[B,C]$ as a partitioned matrix.  The meaning of $\begin{bmatrix}B\\ C\end{bmatrix}$ for bimaps $B$ and $C$ with $\rt{B}=\rt{C}$ is similarly understood. 

\begin{thm}\label{thm:projectives}
If $P$ is a projective in $\Adj(W)$, then $\lt{P}$ is a projective $R$-module and $\rt{P}\to\hom_R(\lt{P},W)$ given by $y\mapsto Py$ is an epimorphism.
\end{thm}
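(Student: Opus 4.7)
The plan is twofold: for each of the two conclusions I will exhibit an explicit epimorphism onto $P$ in $\Adj(W)$, split it using projectivity of $P$, and read off the desired data from the splitting. Both constructions hinge on \lemref{lem:epic-monic}(ii), which makes an adjoint-morphism epic as soon as its left component is epic and its right component is monic.

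For surjectivity of $\sigma^P:\rt{P}\to\hom_R(\lt{P},W)$, $v\mapsto Pv$, I would work with the left semi-product $B:=[P,\lt{P}\lversor_R W]$, whose right module is $\rt{P}\oplus\hom_R(\lt{P},W)$. The coproduct inclusion $\iota_P:\rt{P}\hookrightarrow\rt{B}$ is monic, so $(1_{\lt{P}},\iota_P)$ is an epic adjoint-morphism $B\to P$ by \lemref{lem:epic-monic}(ii). Projectivity of $P$ produces a section $\mor{\tau}\in\Adj(P,B)$. Expanding $\mor{\tau}(1_{\lt{P}},\iota_P)=1_P$ via the composition rule \eqref{eq:def-comp} pins down $\cev{\tau}=1_{\lt{P}}$ and forces $\vec{\tau}$ to be a retraction of $\iota_P$. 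Restricting $\vec{\tau}$ to the complementary summand yields $\rho:\hom_R(\lt{P},W)\to\rt{P}$, and the adjoint condition on $\mor{\tau}$ collapses to $\phi(u)=uP\rho(\phi)$ for all $u$ and $\phi$. That is exactly $\sigma^P\circ\rho=1_{\hom_R(\lt{P},W)}$, so $\sigma^P$ is a split epimorphism.

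For projectivity of $\lt{P}$ as an $R$-module, let $\beta:X\twoheadrightarrow Y$ be epic in ${}_R{\tt Mod}$ and $\alpha:\lt{P}\to Y$. I would form the pullback $Q=\lt{P}\times_Y X$ with projections $\pi_1,\pi_2$ (noting $\pi_1$ is epic since $\beta$ is) and define the bimap $C:Q\times\rt{P}\to W$ by $C(q,v):=\pi_1(q)Pv$. Then $(\pi_1,1_{\rt{P}})$ is an adjoint-morphism $C\to P$, epic by \lemref{lem:epic-monic}(ii). Projectivity of $P$ furnishes a section $\mor{\tau}\in\Adj(P,C)$ with $\cev{\tau}\pi_1=1_{\lt{P}}$ and $\vec{\tau}=1_{\rt{P}}$; in particular $\cev{\tau}$ is a section of $\pi_1$, and $\tilde\alpha:=\cev{\tau}\pi_2:\lt{P}\to X$ satisfies $\tilde\alpha\beta=\cev{\tau}\pi_1\alpha=\alpha$ by the pullback relation $\pi_2\beta=\pi_1\alpha$.

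The main obstacle is guessing these two auxiliary bimaps. Once one sees that enlarging $\rt{P}$ by a copy of $\lt{P}\lversor_R W$ on the right (for the second claim) and replacing $\lt{P}$ by the pullback $Q$ on the left (for the first) produce bimaps whose canonical epimorphism onto $P$ encodes precisely the splitting or lift one wants, the remaining verifications are routine bookkeeping with the composition rule and the adjoint condition.
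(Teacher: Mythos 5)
Your proof is correct, and both halves deviate from the paper's argument in instructive ways. For the surjectivity of $y\mapsto Py$, the paper also splits an epimorphism from a left semi-product, but it works one functional at a time: it forms $[P,\pi]$ for a single $\pi\in\hom_R(\lt{P},W)$, splits $[P,\pi]\onto P$, and then extracts $y_\pi$ from an idempotent endomorphism $\mor{\epsilon}$ of $[P,\pi]$ by locating $y_\pi\oplus(-1)$ in $\im(1-\vec{\epsilon})$ and invoking perpendicularity of $\ker(1-\cev{\epsilon})$ and $\im(1-\vec{\epsilon})$. Your choice of the single bimap $[P,\lt{P}\lversor_R W]$ packages all of $\hom_R(\lt{P},W)$ at once, so one splitting hands you a genuine $S$-linear retraction $\rho$ of $\sigma^P$ directly from the adjoint identity $u\phi=uP\rho(\phi)$ --- no idempotent analysis, and you get the stronger conclusion that $\sigma^P$ is a \emph{split} epimorphism. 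For projectivity of $\lt{P}$, the paper's device is more economical than yours: it takes the everywhere-zero bimaps $B:U\times 0\to W$ and $C:U'\times 0\to W$, so that $(\lt{\mu},0)$ is automatically an adjoint-epimorphism and the lift is read off immediately; your pullback bimap $Q\times\rt{P}\to W$ works just as well (the pullback of an epimorphism in ${}_R{\tt Mod}$ is an epimorphism, and \lemref{lem:epic-monic}(ii) applies), at the cost of reducing the general lifting problem to the split case rather than lifting against an arbitrary epimorphism directly. Both of your constructions are sound, and the verifications you defer (the adjoint conditions for $(1_{\lt{P}},\iota_P)$ and $(\pi_1,1_{\rt{P}})$, and the identification of the composite's right component via \eqref{eq:def-comp}) do check out.
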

\begin{proof}
Let $\lt{\mu}\in \hom_R(U,U')$ be an epimorphism and $\lt{\nu}\in \hom_R(\lt{P},U')$.  Take $B:U\times 0\to W$ and $C:U'\times 0\to W$ (which are both 0 everywhere) and notice $(\lt{\mu},0)\in \Adj(B,C)$ is an epimorphism and $(\lt{\nu},0)\in \Adj(P,C)$.  As $P$ is projective there is a $\mor{\tau}\in \Adj(P,B)$ such that $\mor{\tau}(\lt{\mu},0)=(\lt{\nu},0)$.  Thus, $\lt{\tau}\lt{\mu}=\lt{\nu}$ and so $\lt{P}$ is projective in ${}_R {\tt Mod}$.\footnote{Note that construction cannot be modified to show $\rt{P}$ is injective in ${\tt Mod}_S$.}  Now concentrate on the second claim.

We treat $\pi\in \hom_R(\lt{P},W)$ as an $(R,S)$-bimap $\lt{P}\times S\to W$ where $(x, s)\mapsto x\pi s$.  It follows that $\mor{\mu}=(x\mapsto x,y\mapsto y\oplus 0)$ is an epimorphism $[P,\pi]\to P$.  As $P$ is projective, the epimorphism $\mor{\mu}$ splits and so there exists a monomorphism $\mor{\nu}\in \Adj(P,[P,\pi])$ such that $\mor{\nu}\mor{\mu}=1_P$.  As $\cev{\mu}=1$, so does $\cev{\nu}$.  Also, $\mor{\mu}\mor{\nu}=\mor{\epsilon}=(1,\vec{\epsilon})$ is an idempotent endomorphism of $[P,\pi]$, and so is $(1-\cev{\epsilon},1-\vec{\epsilon})=(0,1-\vec{\epsilon})$.  As $\ker (1-\vec{\epsilon})=\rt{P}\oplus 0$ and $\rt{P}\oplus S=\ker (1-\vec{\epsilon})\oplus \im (1-\vec{\epsilon})$, there is a $y_{\pi}\in \rt{P}$ such that $y_{\pi}\oplus (-1)\in \im (1-\vec{\epsilon})$.  Also, $\ker (1-\cev{\epsilon})=\lt{P}$ is $[P,\pi]$-perpendicular to $\im (1-\vec{\epsilon})$ and so
\begin{align*}
	0 & = x [P,\pi] (y_{\pi}\oplus (-1) ) = xPy_{\pi}-x\pi & (\forall x\in \lt{P}).
\end{align*}
Therefore $xPy_{\pi}=x\pi$, for all $x\in \lt{P}$.  As $\pi$ is arbitrary this says that for each
$\pi\in \hom_R(\lt{P},W)$, there is a $y_{\pi}\in \rt{P}$ such that $Py_{\pi}=\pi$.
\end{proof}

\begin{coro}\label{coro:projectives}
If $P$ is projective in $\Adj(W)$ then there is a unique monomorphism $\mor{\iota}:\lt{P}\lversor W\hookrightarrow P$ with $\cev{\iota}$ the identity.
\end{coro}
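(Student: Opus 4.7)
The plan is to build $\mor{\iota}$ by invoking the universal property of the versor $\lt{P}\lversor W$, and then to identify its right component with the epimorphism produced by \thmref{thm:projectives}. First, I would apply \lemref{lem:exists-star}(i) with $U=\lt{P}$, $B=P$, and $\alpha=1_{\lt{P}}$; this furnishes a unique adjoint-morphism $\mor{\iota}\in\Adj(\lt{P}\lversor W,P)$ satisfying $\cev{\iota}=1_{\lt{P}}$. That uniqueness clause is already half of the desired conclusion, so no further argument for uniqueness is needed (alternatively, right nondegeneracy of $\lt{P}\lversor W$ and \lemref{lem:non-deg}(i) give the same thing).

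Next, using the standard model $\lt{P}\lversor W=\hom_R(\lt{P},W)$ with $u\lversor\tau=u\tau$, the remark following \lemref{lem:exists-star} identifies $\vec{\iota}=\cev{\iota}P$ with the assignment $\rt{P}\to\hom_R(\lt{P},W)$, $y\mapsto (x\mapsto xPy)=Py$. This is precisely the map whose epicness is guaranteed by the second assertion of \thmref{thm:projectives}, since $P$ is projective in $\Adj(W)$.

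Finally, to conclude that $\mor{\iota}$ is a monomorphism I invoke \lemref{lem:epic-monic}(i): the left component $\cev{\iota}=1_{\lt{P}}$ is trivially monic, and the right component $\vec{\iota}\colon y\mapsto Py$ is epic by the previous step. The only step with any substance is confirming that the abstract $\vec{\iota}$ delivered by \lemref{lem:exists-star} agrees, under the standard choice of $\lversor$, with the concrete map $y\mapsto Py$ appearing in \thmref{thm:projectives}; this is a direct unravelling of the adjoint condition $\cev{\iota}P=(\lt{P}\lversor W)\vec{\iota}$ and poses no real difficulty.
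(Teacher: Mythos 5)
Your argument is correct and is precisely the proof the paper leaves implicit: lift $1_{\lt{P}}$ via \lemref{lem:exists-star}(i) (which also gives uniqueness), identify $\vec{\iota}$ with the map $y\mapsto Py$ shown to be epic in \thmref{thm:projectives}, and conclude via \lemref{lem:epic-monic}(i). No gaps.
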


Following \corref{coro:projectives} we are satisfied to describe projectives as essentially versors.  This is the final point to verify.

\begin{thm}\label{thm:projective}
If $P$ is projective in ${{}_R{\tt Mod}}$ then $P\lversor_R W$ is projective in $\Adj(W)$.
\end{thm}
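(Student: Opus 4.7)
My plan is to exploit the adjunction $(-)\lversor_R W \dashv \llt{(-)}$ established in \thmref{thm:adjoint-functor}. The standard categorical principle says a left adjoint preserves projectives whenever its right adjoint preserves epimorphisms, so the crux of the argument is to observe that $\llt{(-)}$ sends epimorphisms in $\Adj(W)$ to epimorphisms in ${}_R{\tt Mod}$. This is immediate from \lemref{lem:epic-monic}(ii): if $\mor{\mu}$ is an adjoint-epimorphism, then by definition $\cev{\mu}=\llt{\mor{\mu}}$ is a module epimorphism.

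For the direct argument, suppose $\mor{\mu}\in \Adj(B,C)$ is epic and $\mor{\nu}\in \Adj(P\lversor_R W,C)$ is arbitrary. Passing to left components produces a module homomorphism $\cev{\nu}\in\hom_R(P,\lt{C})$ and an $R$-epimorphism $\cev{\mu}:\lt{B}\to\lt{C}$. Since $P$ is projective in ${}_R{\tt Mod}$, there is a lift $\tau\in \hom_R(P,\lt{B})$ with $\tau\cev{\mu}=\cev{\nu}$. By the lifting property of \lemref{lem:exists-star}(i), $\tau$ extends uniquely to an adjoint-morphism $\mor{\tau}\in \Adj(P\lversor_R W,B)$ with $\cev{\tau}=\tau$.

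It remains to verify $\mor{\tau}\mor{\mu}=\mor{\nu}$. Computing left components, $\llt{\mor{\tau}\mor{\mu}}=\cev{\tau}\cev{\mu}=\tau\cev{\mu}=\cev{\nu}=\llt{\mor{\nu}}$. Now both $\mor{\tau}\mor{\mu}$ and $\mor{\nu}$ are adjoint-morphisms out of $P\lversor_R W$ with matching left components, and versors are right-nondegenerate by construction, so \lemref{lem:non-deg}(i) forces equality. Hence $\mor{\nu}$ factors through $\mor{\mu}$ and $P\lversor_R W$ is projective in $\Adj(W)$. I do not anticipate any real obstacle here; the proof is essentially a mechanical assembly of earlier results, with the only point requiring attention being that the versor is right-nondegenerate so the adjunction bijection $\Phi_{P,B}$ is actually a bijection (not merely a surjection), which in turn is what makes the lifted $\mor{\tau}$ unique and makes the equality $\mor{\tau}\mor{\mu}=\mor{\nu}$ follow from the equality of their left components.
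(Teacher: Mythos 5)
Your proof is correct and takes essentially the same route as the paper's: lift the left component through $\cev{\mu}$ using projectivity of $P$ in ${}_R{\tt Mod}$, extend to an adjoint-morphism via \lemref{lem:exists-star}(i), and invoke right-nondegeneracy of the versor together with \lemref{lem:non-deg}(i) to conclude that matching left components forces $\mor{\tau}\mor{\mu}=\mor{\nu}$. Your explicit appeal to \lemref{lem:epic-monic}(ii) and the framing via the adjunction of \thmref{thm:adjoint-functor} are just slightly more verbose versions of steps the paper leaves implicit.
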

\begin{proof}
Let $\mor{\mu}\in \Adj(B,C)$ be an epimorphism and  $\mor{\nu}\in\Adj(P\lversor W,C)$.  As $P$ is projective and $\cev{\mu}$ is an epimorphism, there is $\cev{\tau}\in\hom_R(P,\rt{B})$ such that $\cev{\tau}\cev{\mu}=\cev{\nu}$.  By \lemref{lem:exists-star} there is a unique $\mor{\tau}\in \Adj(P\lversor W,B)$ extending $\cev{\tau}$.  Now
$\mor{\tau}\mor{\mu}=(\cev{\tau}\cev{\mu},\vec{\mu}\vec{\tau})=(\cev{\nu},\vec{\mu}\vec{\tau})$.  As $P\lversor W$ is right nondegenerate, $\vec{\mu}\vec{\tau}=\vec{\nu}$ (\lemref{lem:non-deg}(i)).  Thus,
$\mor{\tau}\mor{\mu}=\mor{\nu}$ proving that $P\lversor W$ is projective.
\end{proof}

\begin{remark}
An abelian category is said to have \emph{enough} projectives if every object is the epimorphic image of a projective.  As module categories have enough projectives, for every bimap $B$ into $W$ there is a projective $P$ and an epimorphism $\lt{\mu}:P\to \lt{B}$ which extends uniquely to $\mor{\mu}\in \Adj(P\lversor_R W,B)$.  However, $\mor{\mu}$ need not be epic as $\rt{\mu}$ might not be monic.  For example, if $B:0\times V\to W$ and $V\neq 0$, then $\Adj(P\lversor_R W, B)=0$ for all $R$-modules $P$ (so no element is an epimorphism).  
\end{remark}

\section{Nondegenerate bimaps}\label{sec:non-deg}\label{sec:nondeg}
In this section we focus on nondegenerate bimaps and nondegenerate adjoint-morphisms.  These do not form a subcategory of $\Adj(W)$ (under any choice of $W\neq 0$) but are nevertheless quite robust.  For example, products, kernels, images, and their duals remain nondegenerate when we start with nondegenerate bimaps and nondegenerate adjoint-morphisms.     Unexpectedly, division maps are precisely the simple objects with respect to nondegenerate adjoint-morphisms (\thmref{thm:division}).  

Recall form Section \ref{sec:homotopism} that for every bimap $B$ in $\Adj(W)$, we may treat the radical of $B$ as an ideal $\sqrt{B}:V^{\top}\times U^{\bot}\to 0$ in the homotopism category and so we can pass to a nondegenerate bimap $B^{\surd}=(B/\sqrt{B}):U/V^{\top} \times V/U^{\bot}\to W$ where
\begin{align*}
		(u+V^{\top})(B^{\surd})(U^{\bot}+v) & = uBv, & (\forall u\in U, \forall v\in V).
\end{align*}
Furthermore, for $\mor{\mu}\in \Adj(B,C)$, define
\begin{align*}
	\mor{\mu}^{\surd} & = (\cev{\mu}/\sqrt{\cev{\mu}}, \vec{\mu}/\sqrt{\vec{\mu}})\in \hom_R(\lt{B}/\rt{B}^{\top}, \lt{C}/\rt{C}^{\top})\times \hom_S(\rt{C}/\lt{C}^{\bot},\rt{B}/\lt{B}^{\bot}).
\end{align*}
The full subcategory $\Adj(W)$ of nondegenerate bimaps is a reflexive additive subcategory, but it is not an abelian subcategory as the following example demonstrates.

\begin{ex}\label{ex:deg-morph}
If $K$ is a proper field extension of a field $k$, then set $C:K\times k\to K$ to be
$xCa=xa$, for all $x\in K$ and all $a\in k$. Also define $B:K\times K\to K$ so that 
$xBy=xy$, for all $x,y\in K$.  There is an adjoint-epimorphism
$(1,\iota:k\hookrightarrow K)\in \Adj(B,C)$ whose kernel is the restriction of $B$ to the degenerate bimap
$0\times K/k\to K$ (where here $K/k$ denotes the quotient as additive groups).  
\end{ex}

\subsection{Nondegenerate adjoint-morphisms}
Kernels of adjoint morphisms between two nondegenerate bimaps can be degenerate (\exref{ex:deg-morph}).  Hence, we restrict adjoint-morphisms to those whose kernels and cokernels are nondegenerate as well.  We call these \emph{nondegenerate} adjoint-morphisms.

\begin{lemma}\label{lem:perp-adj-tool}
Let $\mor{\mu}\in \Adj(B,C)$.
\begin{align*}
	X^{\bot}\cap \im \vec{\mu} & = \vec{\mu}((X\cev{\mu})^{\bot}) & (\forall X\subseteq \lt{B}),\\
	Y^{\top}\cap\im \cev{\mu} & = (\vec{\mu}Y)^{\top} \cev{\mu} & (\forall Y\subseteq \rt{C}).
\end{align*}
Indeed, if $B$ is right-nondegenerate and $C$ is left-nondegenerate then $(\im \cev{\mu})^{\bot}=\ker \vec{\mu}$
and $(\im \vec{\mu})^{\top}=\ker \cev{\mu}$.
\end{lemma}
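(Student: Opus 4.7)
The plan is to unwind the defining adjoint relation $\cev{\mu}C=B\vec{\mu}$ twice and then recover the ``indeed'' clause by specialization. Throughout, $\bot$ on a subset of $\lt{B}$ is taken with respect to $B$, while $\bot$ on a subset of $\lt{C}$ is computed with respect to $C$, and similarly for $\top$; the only thing to be careful about is which bimap a given orthogonal refers to.

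First I would prove $X^{\bot}\cap\im\vec{\mu}=\vec{\mu}((X\cev{\mu})^{\bot})$ by double inclusion. For $\supseteq$: given $v\in(X\cev{\mu})^{\bot}$, the adjoint identity gives $XB\vec{\mu}v = X\cev{\mu}Cv=0$, so $\vec{\mu}v\in X^{\bot}\cap\im\vec{\mu}$. For $\subseteq$: an arbitrary element of $X^{\bot}\cap\im\vec{\mu}$ has the form $\vec{\mu}v$ with $XB\vec{\mu}v=0$, and the same calculation rewrites this as $X\cev{\mu}Cv=0$, i.e.\ $v\in(X\cev{\mu})^{\bot}$, so $\vec{\mu}v\in\vec{\mu}((X\cev{\mu})^{\bot})$. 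The companion identity $Y^{\top}\cap\im\cev{\mu}=(\vec{\mu}Y)^{\top}\cev{\mu}$ follows by running the same argument on the transposed pair, i.e.\ by applying the duality $t({}_RW_S)$ which swaps $\cev{\mu}\leftrightarrow\vec{\mu}$ and $\bot\leftrightarrow\top$.

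The final assertions are specializations. Setting $X=\lt{B}$ in the first equation gives $\vec{\mu}((\im\cev{\mu})^{\bot})=\lt{B}^{\bot}\cap\im\vec{\mu}$, which is $0$ by right-nondegeneracy of $B$; hence $(\im\cev{\mu})^{\bot}\subseteq\ker\vec{\mu}$. The reverse inclusion is a one-line check using $\cev{\mu}C=B\vec{\mu}$: for any $v\in\ker\vec{\mu}$ and $u\in\lt{B}$, $u\cev{\mu}Cv=uB\vec{\mu}v=0$. Setting $Y=\rt{C}$ in the second equation and invoking left-nondegeneracy of $C$ yields $(\im\vec{\mu})^{\top}=\ker\cev{\mu}$ by the same reasoning. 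Beyond the notational bookkeeping of which space each $\bot$ or $\top$ lives in, there is no substantive obstacle; the entire lemma is a direct calculation from $\cev{\mu}C=B\vec{\mu}$ and the definitions of the orthogonality operators.
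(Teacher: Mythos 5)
Your proof is correct and follows essentially the same route as the paper: a direct unwinding of $\cev{\mu}C=B\vec{\mu}$ for the two displayed identities, the transpose for the second one, and then specialization at $X=\lt{B}$ (resp.\ $Y=\rt{C}$) together with the containments $\ker\vec{\mu}\leq(\im\cev{\mu})^{\bot}$ and $\ker\cev{\mu}\leq(\im\vec{\mu})^{\top}$ from \lemref{lem:perp-morph} (which you inline rather than cite) for the ``indeed'' clause. No gaps.
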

\begin{proof}
First, $\vec{\mu}((X\cev{\mu})^{\bot}) = \vec{\mu}\{v'\in \rt{C}: 0=(X\cev{\mu}) Cv'\}=\{\rt{\mu}v': v'\in \rt{C}, 0=XB\vec{\mu}v'\}=X^{\bot}\cap \im \vec{\mu}$.   If $B$ is right-nondegenerate, then $0=\lt{B}^{\bot}\cap \im\rt{\mu}=\rt{\mu}((\im\lt{\mu})^{\bot})$; thus, $(\im \lt{\mu})^{\bot}\leq \ker\rt{\mu}$.  By \lemref{lem:perp-morph}, $\ker\rt{\mu}\leq \im (\lt{\mu})^{\bot}$.
The rest follows similarly.
\end{proof}

\begin{prop}\label{prop:nondeg-adj}
Let $B$ and $C$ be nondegenerate bimaps in $\Adj(W)$.
For $\mor{\mu}\in \Adj(B,C)$ the following are equivalent.
\begin{enumerate}[(i)]
\item $(\ker \cev{\mu})^{\bot}=\im \vec{\mu}$, and $(\ker \vec{\mu})^{\top}=\im \cev{\mu}$.

\item $(\ker \cev{\mu})^{\bot}\leq\im \vec{\mu}$, and $(\ker \vec{\mu})^{\top}\leq\im \cev{\mu}$.

\item $\ker \cev{\mu}=(\ker \cev{\mu})^{\bot\top}$, $\im \cev{\mu}=(\im\cev{\mu})^{\bot\top}$,
$\ker \vec{\mu}=(\ker\vec{\mu})^{\top\bot}$, and $\im \vec{\mu}=(\im\vec{\mu})^{\top\bot}$.

\end{enumerate}
\end{prop}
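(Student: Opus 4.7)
The proof should be a tidy bookkeeping exercise once Lemma \ref{lem:perp-morph} and Lemma \ref{lem:perp-adj-tool} are invoked. The plan is to handle the easy equivalence (i)$\Leftrightarrow$(ii) first, then show (i)$\Leftrightarrow$(iii) by repeatedly applying $\bot$ and $\top$ to the identities already in hand.

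First, observe that Lemma \ref{lem:perp-morph} gives, for any adjoint morphism, $\im\vec{\mu}\leq(\ker\cev{\mu})^{\bot}$ and $\im\cev{\mu}\leq(\ker\vec{\mu})^{\top}$. Therefore (i) and (ii) differ only by containments that hold automatically, so (i)$\Leftrightarrow$(ii). This is the cheap step.

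Next I would pull out the two key identities supplied by Lemma \ref{lem:perp-adj-tool} in the nondegenerate setting:
\begin{align*}
(\im\cev{\mu})^{\bot}&=\ker\vec{\mu}, & (\im\vec{\mu})^{\top}&=\ker\cev{\mu}.
\end{align*}
These are available because $B$ is right-nondegenerate and $C$ is left-nondegenerate. For (i)$\Rightarrow$(iii), apply $\top$ to $(\ker\cev{\mu})^{\bot}=\im\vec{\mu}$ and use the second identity to get $(\ker\cev{\mu})^{\bot\top}=(\im\vec{\mu})^{\top}=\ker\cev{\mu}$; symmetrically, applying $\bot$ to $(\ker\vec{\mu})^{\top}=\im\cev{\mu}$ yields $\ker\vec{\mu}=(\ker\vec{\mu})^{\top\bot}$. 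For the image equalities, start from the two identities above and feed them through (i): $(\im\cev{\mu})^{\bot\top}=(\ker\vec{\mu})^{\top}=\im\cev{\mu}$ and $(\im\vec{\mu})^{\top\bot}=(\ker\cev{\mu})^{\bot}=\im\vec{\mu}$.

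Finally for (iii)$\Rightarrow$(i), just reverse the last two manipulations. From the lemma, $(\im\cev{\mu})^{\bot\top}=(\ker\vec{\mu})^{\top}$; combined with the closure assumption $\im\cev{\mu}=(\im\cev{\mu})^{\bot\top}$ this gives $\im\cev{\mu}=(\ker\vec{\mu})^{\top}$. Similarly $\im\vec{\mu}=(\im\vec{\mu})^{\top\bot}=(\ker\cev{\mu})^{\bot}$, completing (i).

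The only thing that requires any care is making sure each $\bot$/$\top$ is applied on the correct side (they swap the roles of $\lt{(-)}$ and $\rt{(-)}$) and that nondegeneracy of \emph{both} $B$ and $C$ is used, so that Lemma \ref{lem:perp-adj-tool}'s final clause applies. That is really the only potential pitfall; there is no conceptual obstacle, since the proposition is essentially asserting that $\ker$ and $\im$ on each side are mutually $\bot$/$\top$-closed, which is exactly what Lemma \ref{lem:perp-adj-tool} already half-says.
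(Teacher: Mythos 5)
Your proposal is correct and follows essentially the same route as the paper: Lemma \ref{lem:perp-morph} supplies the automatic containments making (i) and (ii) equivalent, and the final clause of Lemma \ref{lem:perp-adj-tool} (valid since $B$ and $C$ are nondegenerate) is used in exactly the same way to pass between (i) and (iii) by applying $\bot$ and $\top$ to the stated identities. No gaps.
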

\begin{proof}
Evidently (i) implies (ii).  By \lemref{lem:perp-morph}, (ii) implies (i).

Suppose (i).  By \lemref{lem:perp-adj-tool}, $(\im \cev{\mu})^{\bot\top} = ( \ker \vec{\mu})^{\top}=\im \cev{\mu}$; thus also $(\ker\vec{\mu})^{\top\bot}=(\im \cev{\mu})^{\bot}=\ker \vec{\mu}$.  The rest follow similarly so that (i) implies (iii).

Suppose (iii).  By \lemref{lem:perp-adj-tool}, $(\ker \vec{\mu})^{\top}=(\im \cev{\mu})^{\bot\top}=\im \cev{\mu}$.
Similarly, $(\ker \cev{\mu})^{\bot}=\im \vec{\mu}$.  Hence, (iii) implies (i).
\end{proof}

We say $\mor{\mu}$ is \emph{nondegenerate} if it satisfies any of the properties in \propref{prop:nondeg-adj}.
The following short-cuts can also help in proving nondegeneracy of adjoint-morphisms.

\begin{prop}\label{prop:iso-nondeg}
For  bimaps $B$ and $C$, $\mor{\mu}\in \Adj(B,C)$ is nondegenerate if:
\begin{enumerate}[(i)]
\item $\mor{\mu}$ is monic, $B$ is right-nondegenerate, $C$ is nondegenerate, and $(\ker \vec{\mu})^{\top}\leq \im \cev{\mu}$.

\item $\mor{\mu}$ is epic, $B$ is nondegenerate, $C$ is left-nondegenerate, and $(\ker \cev{\mu})^{\bot}\leq \im \vec{\mu}$.

\item $\mor{\mu}$ is an isomorphism, $B$ is right-nondegenerate and $C$ is left-nondegenerate.
\end{enumerate}
\end{prop}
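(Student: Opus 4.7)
The plan is to verify, in each case, the two inclusions $(\ker\cev{\mu})^{\bot}\leq\im\vec{\mu}$ and $(\ker\vec{\mu})^{\top}\leq\im\cev{\mu}$ from \propref{prop:nondeg-adj}(ii), together with the background requirement, implicit in the definition of a nondegenerate adjoint-morphism, that $B$ and $C$ are both fully nondegenerate. The same pattern will recur in every part: the monic, epic, or iso hypothesis on $\mor{\mu}$ collapses one of the two inclusions to a triviality, because a vanishing kernel has the whole module as its perpendicular while a surjective image is the whole module itself. The remaining inclusion will either be stated outright in the hypotheses or will be delivered by transposition.

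For (i), I would first invoke \lemref{lem:epic-monic}(i) to extract $\cev{\mu}$ monic and $\vec{\mu}$ epic, so that $\ker\cev{\mu}=0$ and $\im\vec{\mu}=\rt{B}$; then $(\ker\cev{\mu})^{\bot}=\rt{B}=\im\vec{\mu}$ automatically, while the other inclusion is exactly the hypothesis. To promote $B$ from right-nondegenerate to fully nondegenerate, I would take $u\in\rt{B}^{\top}$ and exploit the adjoint relation $u\cev{\mu}Cv'=uB\vec{\mu}v'$: the right side vanishes for every $v'\in\rt{C}$ because $\vec{\mu}v'\in\rt{B}$, so $u\cev{\mu}\in\rt{C}^{\top}=0$ by the left-nondegeneracy of $C$, and then $u=0$ because $\cev{\mu}$ is monic.

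Part (ii) is then the transpose of (i): applying $t({}_RW_S)$ swaps $\cev{\mu}$ with $\vec{\mu}$, exchanges monic with epic, and interchanges the left/right notions, so the hypotheses of (ii) translate directly into those of (i) inside $\Adj({}_{S^{op}}W_{R^{op}})$; since the characterizing inclusions of \propref{prop:nondeg-adj} are preserved by the transpose, the conclusion descends. Part (iii) is then almost immediate: by \lemref{lem:epic-monic}(iii) both $\cev{\mu}$ and $\vec{\mu}$ are isomorphisms, so both inclusions hold trivially, and the arguments from (i) and (ii) applied symmetrically upgrade the one-sided nondegeneracy hypotheses on $B$ and $C$ to full nondegeneracy.

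The only conceptual obstacle is recognizing that \propref{prop:nondeg-adj}(ii), the one-sided inclusion version, is the correct tool: the reverse inclusions come for free from \lemref{lem:perp-morph}, so only the single given inclusion is needed to close each case. Once that is in place, everything else reduces to a transparent chase using the defining identity $\cev{\mu}C=B\vec{\mu}$ and the cancellation properties recorded in \lemref{lem:epic-monic} and \lemref{lem:non-deg}.
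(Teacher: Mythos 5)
Your proposal is correct and follows essentially the same route as the paper: reduce to the one-sided inclusions of \propref{prop:nondeg-adj}(ii), let the monic/epic hypothesis trivialize one inclusion via \lemref{lem:epic-monic}, upgrade the remaining one-sided nondegeneracy of $B$ (resp.\ $C$) through the adjoint identity $\cev{\mu}C=B\vec{\mu}$ together with injectivity of the relevant component, and dispatch (ii) and (iii) by transposition and combination. Your explicit chase showing $\rt{B}^{\top}\cev{\mu}\leq\rt{C}^{\top}$ is exactly the step the paper compresses into a (mis-numbered) citation, so nothing is missing.
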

\begin{proof}
For (i), as $\mor{\mu}$ is monic, $\ker \cev{\mu}=0$, $\im \vec{\mu}=\rt{B}$, and $(\ker \cev{\mu})^{\bot}=\im \vec{\mu}$.  By \lemref{lem:non-deg}(iii),  $\rt{B}^{\top}\cev{\mu}\leq \rt{C}^{\top}$.  As $\cev{\mu}$ is monic and $C$ is nondegenerate, it follows that $(\rt{B})^{\top}=0$ so that $B$ is left-nondegenerate.  As $B$ is also right-nondegenerate, $B$ is nondegenerate.  By \propref{prop:nondeg-adj}(ii), $\mor{\mu}$ is nondegenerate.

Part (ii) follows the transpose of (i) and part (iii) follows from (i) and (ii).
\end{proof}

Once we have a nondegenerate adjoint-morphism we can focus on a single component of the pair, for example.
\begin{prop}\label{prop:one-variable}
If $\mor{\mu}\in \Adj(B,C)$ is nondegenerate then $\cev{\mu}$ is a monomorphism, epimorphism, or isomorphism, if, and only if, $\vec{\mu}$ is an epimorphism, monomorphism, or isomorphism, respectively.
\end{prop}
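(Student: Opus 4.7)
The plan is to exploit the characterization of nondegeneracy from Proposition \ref{prop:nondeg-adj}(i) together with the identities $(\im \cev{\mu})^{\bot}=\ker\vec{\mu}$ and $(\im \vec{\mu})^{\top}=\ker\cev{\mu}$ from Lemma \ref{lem:perp-adj-tool}, which are available because $B$ and $C$ are nondegenerate. Combining these four equalities with the tautologies $0^{\bot}=\rt{B}$, $0^{\top}=\lt{C}$, $\rt{B}^{\top}=0$, and $\lt{C}^{\bot}=0$ (where the last two use nondegeneracy of $B$ and $C$), each of the three biconditionals in the statement reduces to a single string of equalities.

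First I would handle the monic/epic equivalence. If $\cev{\mu}$ is monic, then $\ker\cev{\mu}=0$, so by \propref{prop:nondeg-adj}(i), $\im\vec{\mu}=(\ker\cev{\mu})^{\bot}=0^{\bot}=\rt{B}$, making $\vec{\mu}$ epic. Conversely, if $\vec{\mu}$ is epic, then $\im\vec{\mu}=\rt{B}$, so by \lemref{lem:perp-adj-tool}, $\ker\cev{\mu}=(\im\vec{\mu})^{\top}=\rt{B}^{\top}=0$, making $\cev{\mu}$ monic. Next I would dualize to obtain the epic/monic direction: $\cev{\mu}$ epic $\iff \im\cev{\mu}=\lt{C} \iff \ker\vec{\mu}=(\im\cev{\mu})^{\bot}=\lt{C}^{\bot}=0 \iff \vec{\mu}$ monic, where the forward step uses \lemref{lem:perp-adj-tool} and the reverse uses \propref{prop:nondeg-adj}(i) to get $\im\cev{\mu}=(\ker\vec{\mu})^{\top}=0^{\top}=\lt{C}$.

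Finally, the isomorphism claim is immediate: $\cev{\mu}$ is an isomorphism iff it is both monic and epic iff (by the two cases above) $\vec{\mu}$ is both epic and monic iff $\vec{\mu}$ is an isomorphism. There is no genuine obstacle here—the proof is a bookkeeping exercise, and the only thing to be careful about is pairing each hypothesis (monic vs. epic, on $\cev{\mu}$ vs. $\vec{\mu}$) with the correct one of the four perpendicularity identities so that the required direction of implication follows. The result illustrates how, under nondegeneracy, the two coordinates of an adjoint-morphism determine each other's cancellation behaviour in reverse, consistent with the contravariant flavour of $\rrt{(-)}$.
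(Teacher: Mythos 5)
Your proof is correct and follows essentially the same route as the paper's: both rest on \propref{prop:nondeg-adj}(i) together with the perpendicularity identities of \lemref{lem:perp-adj-tool} and the nondegeneracy of $B$ and $C$. The only cosmetic difference is that the paper proves just the two forward implications and dispatches the converses via the transpose, whereas you verify the converses directly with $(\im\vec{\mu})^{\top}=\ker\cev{\mu}$ and $(\ker\vec{\mu})^{\top}=\im\cev{\mu}$ --- an equally valid bookkeeping choice.
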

\begin{proof}
If $\cev{\mu}$ is monic then $\rt{B}=0^{\bot}=(\ker \cev{\mu})^{\bot}=\im \vec{\mu}$; so $\vec{\mu}$ 
is epic.  Next, if $\cev{\mu}$ is epic then $\lt{C}=\im \cev{\mu}=(\ker \vec{\mu})^{\top}$.  As $C$ is nondegenerate,
this forces $\ker \vec{\mu}=0$.  The rest follows by application of the transpose.
\end{proof}

Following \lemref{lem:perp-morph}, we understood that the orthogonality relations induced by a bimap $B:U\times V\to W$ can be studied through adjoint-morphisms.  The operators $(\bot,\top)$ form a Galois connection (here our definitions use the order-reversing form)
from the lattice of subsets (submodules) of $U$ to the lattice of subsets (submodules) of $V$; cf. \cite{GALOIS}*{p. 155}.  We will need the following observation.

\begin{thm}\label{thm:perp-lattice}
Let $B$ be a bimap.  The map $X\mapsto X^{\bot}$ is a complete lattice anti-isomorphism from the $\top\bot$-stable submodules of $U$ to the $\bot\top$-stable submodules of $V$ (the inverse is $Y\mapsto Y^{\top}$).  In particular, for all sets $\mathcal{X}$ of submodules of $U$,
\begin{align*}
	\left(\bigcap\mathcal{X}\right)^{\bot} & = \left(\sum\mathcal{X}^{\bot}\right)^{\top\bot}.
\end{align*}
\end{thm}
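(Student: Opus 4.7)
My plan is to recognize $(\bot,\top)$ as an antitone Galois connection between the power sets of $U$ and $V$ and then to invoke the general theory cited in \cite{GALOIS}*{p.\ 155}. The first step is to record the defining biconditional
\[ Y\subseteq X^{\bot}\iff XBY=0\iff X\subseteq Y^{\top}, \]
valid for every $X\subseteq U$ and $Y\subseteq V$ and immediate from the definitions of $\bot$ and $\top$. The associative-type properties of $B$ ensure that each $X^{\bot}$ is an $S$-submodule of $V$ and each $Y^{\top}$ is an $R$-submodule of $U$, so the two operators restrict to the submodule lattices on each side.

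From the biconditional I would harvest the standard consequences of an antitone Galois connection: both $\bot$ and $\top$ are order-reversing; $X\subseteq X^{\bot\top}$ and $Y\subseteq Y^{\top\bot}$; and $X^{\bot\top\bot}=X^{\bot}$ together with $Y^{\top\bot\top}=Y^{\top}$. Hence $\bot\top$ and $\top\bot$ are closure operators whose fixed-point sets are, respectively, the $\top\bot$-stable submodules of $U$ and the $\bot\top$-stable submodules of $V$. Every $X^{\bot}$ is automatically $\bot\top$-stable and every $Y^{\top}$ is $\top\bot$-stable, so $\bot$ and $\top$ restrict to maps between the two families; on stable objects the identities $X=X^{\bot\top}$ and $Y=Y^{\top\bot}$ exhibit these restrictions as mutual inverses.

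Finally, each family of stable submodules forms a complete lattice. Intersections of stable submodules remain stable by a direct application of the closure-operator identities, providing arbitrary meets, while joins are obtained by applying the relevant closure to the ordinary submodule sum. An order-reversing bijection between posets automatically exchanges meets and joins, so $\bot$ is a complete lattice anti-isomorphism; applied to a family $\mathcal{X}$ of stable submodules of $U$ it sends the meet $\bigcap \mathcal{X}$ to the join of $\mathcal{X}^{\bot}$ in the stable $V$-lattice, namely $(\sum \mathcal{X}^{\bot})^{\top\bot}$, which is the displayed identity. The main obstacle is essentially cosmetic: once the Galois-connection biconditional is in place, everything after is formal bookkeeping that can be quoted directly from \cite{GALOIS}*{p.\ 155}, so the bulk of the verification reduces to confirming that $X^{\bot}$ and $Y^{\top}$ actually land in the submodule lattices.
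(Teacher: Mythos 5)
Your proposal is correct and takes the same route as the paper, whose entire proof is the citation to \cite{GALOIS}*{Proposition 7.31} that you unfold into the standard Galois-connection facts (the adjunction $Y\subseteq X^{\bot}\Leftrightarrow XBY=0\Leftrightarrow X\subseteq Y^{\top}$, the induced closure operators, and the resulting anti-isomorphism of the lattices of closed elements). One point worth flagging: you prove the displayed identity only for families of $\top\bot$-stable submodules, whereas the theorem as written asserts it ``for all sets $\mathcal{X}$ of submodules of $U$''; your restriction is in fact forced, since the unrestricted claim fails --- e.g.\ for $B:k^2\times k^2\to k$ with $uBv=u_1v_1$, $X_1=\langle e_1\rangle$, $X_2=\langle e_1+e_2\rangle$, one gets $(X_1\cap X_2)^{\bot}=V$ while $(X_1^{\bot}+X_2^{\bot})^{\top\bot}=\langle e_2\rangle$ --- and every later use of the identity (in Theorem \ref{thm:nondeg-comp} and the product proposition) applies it to kernels and images of nondegenerate adjoint-morphisms, which are stable by Proposition \ref{prop:nondeg-adj}(iii), so your version is the one actually needed.
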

\begin{proof} This follows from interpreting \cite{GALOIS}*{Proposition 7.31} in our context.\end{proof}

\subsection{The ersatz-category of nondegenerate bimaps}
First we show that the composition of nondegenerate adjoint-morphisms can be degenerate, \exref{ex:comp-fail}.  Hence, we describe the results within an \emph{ersatz-category} $\Adj^{\surd}(W)$, that is, the collection of nondegenerate bimaps and nondegenerate adjoint-morphisms, which despite not begin closed to all compositions behaves as a category wherever composition is defined.  Hence we limp along with weaker properties that apply in critical cases.

\begin{ex}\label{ex:comp-fail}
Let $k$ be a field, $V=k^n$ for some integer $n\geq 3$, and $\{x,y\}\subseteq V$ a $k$-linearly independent subset of size 2.  Define $W=V\wedge V$, $A:kx\times V/kx\to W$, 
$B:V\times V\to W$, and $C:V/ky\times ky\to W$ by
\begin{align*}
	uA(kx+v) & = u\wedge v & (\forall u\in kx,\forall v\in V),\\
	uBv & = u\wedge v & (\forall u,v\in V),\textnormal{ and }\\
	(u+ky)Cv & = u\wedge v & (\forall u\in V,\forall v\in ky).
\end{align*}
It follows that $(\iota_A:kx\into V,\pi_A:V\onto V/kx)\in \Adj(A,B)$ is a nondegenerate monomorphism and $(\pi_C:V\onto V/ky,\iota_C:ky\into V)\in \Adj(B,C)$ is a nondegenerate epimorphism.  However, $(\iota_A,\pi_A)(\pi_C,\iota_C)$ is degenerate.
\end{ex}

\begin{remark}
We caution that it is often possible that a universal construction leads to a nondegenerate bimap but where the associated adjoint-morphisms are degenerate.  So when we speak of a nondegenerate universal property we mean for \emph{both} the bimaps and the adjoint-morphisms to be nondegenerate. 
\end{remark}

In light of \exref{ex:comp-fail} it is critical to show that a some important composites of nondegenerate adjoint-morphisms are nondegenerate.  The following technical rule will be useful in that effort.

\begin{thm}\label{thm:nondeg-comp}
Let $\mor{\mu}\in \Adj(A,B)$ and $\mor{\nu}\in \Adj(B,C)$ be nondegenerate.
It follows that $\mor{\mu}\mor{\nu}$ is nondegenerate if, and only if,
\begin{align*}
	(\im \cev{\mu}+\ker \cev{\nu})^{\bot\top} \cev{\nu} & \leq \im \cev{\mu}\cev{\nu}, & 
	\vec{\mu}(\im \vec{\nu}+\ker \vec{\mu})^{\top\bot}  & \leq \im \vec{\nu}\vec{\mu}.
\end{align*}
Indeed equality can also be used in place of inequalities.
\end{thm}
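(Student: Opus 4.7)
The plan is to translate the two defining equalities for nondegeneracy of the composite $\mor{\mu}\mor{\nu} = (\cev{\mu}\cev{\nu}, \vec{\nu}\vec{\mu})$ into the two displayed inclusions, using \lemref{lem:perp-adj-tool} to move perpendiculars across adjoint pairs and \thmref{thm:perp-lattice} to rewrite intersections of perpendiculars as $\top\bot$- or $\bot\top$-closures of sums. By \propref{prop:nondeg-adj}(i), $\mor{\mu}\mor{\nu}$ is nondegenerate if and only if both
\[
\bigl(\ker(\cev{\mu}\cev{\nu})\bigr)^{\bot_A} = \im(\vec{\nu}\vec{\mu}) \qquad\text{and}\qquad \bigl(\ker(\vec{\nu}\vec{\mu})\bigr)^{\top_C} = \im(\cev{\mu}\cev{\nu}),
\]
and by transpose symmetry it suffices to analyze one of these in detail, with the other following by dualizing all the formal manipulations.

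I will focus on the first equality. Since $\ker(\cev{\mu}\cev{\nu}) = \cev{\mu}^{-1}(\ker\cev{\nu}) \supseteq \ker\cev{\mu}$, its $A$-perpendicular is already contained in $(\ker\cev{\mu})^{\bot_A} = \im\vec{\mu}$ by nondegeneracy of $\mor{\mu}$. Applying \lemref{lem:perp-adj-tool} to $\mor{\mu}$ with $X = \ker(\cev{\mu}\cev{\nu})$, so that $X\cev{\mu} = \ker\cev{\nu} \cap \im\cev{\mu}$, produces
\[
\bigl(\ker(\cev{\mu}\cev{\nu})\bigr)^{\bot_A} = \vec{\mu}\bigl((\ker\cev{\nu} \cap \im\cev{\mu})^{\bot_B}\bigr).
\]
Next I rewrite the intersection. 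Nondegeneracy of $\mor{\nu}$ gives $(\ker\cev{\nu})^{\bot_B} = \im\vec{\nu}$, and the last clause of \lemref{lem:perp-adj-tool} applied to $\mor{\mu}$ gives $(\im\cev{\mu})^{\bot_B} = \ker\vec{\mu}$ (valid because both $A$ and $B$ are nondegenerate). Feeding these into \thmref{thm:perp-lattice} collapses the intersection, so
\[
\bigl(\ker(\cev{\mu}\cev{\nu})\bigr)^{\bot_A} = \vec{\mu}\bigl((\im\vec{\nu} + \ker\vec{\mu})^{\top\bot}\bigr).
\]
Since $\im\vec{\nu} \leq (\im\vec{\nu} + \ker\vec{\mu})^{\top\bot}$ is automatic from Galois closure, the inclusion $\im(\vec{\nu}\vec{\mu}) = \vec{\mu}(\im\vec{\nu}) \leq \vec{\mu}((\im\vec{\nu} + \ker\vec{\mu})^{\top\bot})$ is free; thus the desired equality holds if and only if the reverse inclusion $\vec{\mu}((\im\vec{\nu} + \ker\vec{\mu})^{\top\bot}) \leq \im(\vec{\nu}\vec{\mu})$ holds, which is precisely the second displayed condition. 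Because one half of each bound is free, this also settles the ``equality can be used in place of inequality'' remark. The mirror computation, starting from $(\ker(\vec{\nu}\vec{\mu}))^{\top_C} = \im(\cev{\mu}\cev{\nu})$ and using the $Y$-formula of \lemref{lem:perp-adj-tool} applied to $\mor{\nu}$ together with the $V$-side instance of \thmref{thm:perp-lattice}, produces the first displayed condition.

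The main obstacle is bookkeeping rather than any deep technical point: the relevant perpendiculars live in three different bimaps, and one must carefully distinguish $\bot_A$ from $\bot_B$, and the $\top\bot$-closure sitting inside $\rt{B}$ (governing the right half) from the $\bot\top$-closure sitting inside $\lt{B}$ (governing the left half). Once these are tracked cleanly, the nondegeneracy hypotheses on $A,B,C$ and on $\mor{\mu},\mor{\nu}$ are used exactly to upgrade the four auxiliary perpendicular containments of \lemref{lem:perp-morph} to equalities, at which point \thmref{thm:perp-lattice} fires mechanically and the theorem drops out.
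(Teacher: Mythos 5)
Your proposal is correct and follows essentially the same route as the paper's proof: both compute $(\ker \cev{\mu}\cev{\nu})^{\bot}$ by first intersecting with $\im\vec{\mu}$, pulling the perpendicular back through \lemref{lem:perp-adj-tool}, converting $(\ker\cev{\nu}\cap\im\cev{\mu})^{\bot}$ into $(\im\vec{\nu}+\ker\vec{\mu})^{\top\bot}$ via \thmref{thm:perp-lattice} (using that these submodules are closed by \propref{prop:nondeg-adj}(iii)), and then invoking the transpose for the other half. Your explicit note that one inclusion in each pair is automatic, which justifies the ``equality in place of inequality'' remark, is a small clarification the paper leaves implicit.
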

\begin{proof}
As $\ker \cev{\mu}\leq \ker \cev{\mu}\cev{\nu}$, $(\ker \cev{\mu}\cev{\nu})^{\bot}\leq (\ker \cev{\mu})^{\bot}=\im \vec{\mu}$.  By \lemref{lem:perp-adj-tool}
and \thmref{thm:perp-lattice},
\begin{align*}
	(\ker \cev{\mu}\cev{\nu})^{\bot} & = (\ker \cev{\mu}\cev{\nu})^{\bot}\cap \im \vec{\mu} 
		 = \vec{\mu}((\ker \cev{\mu}\cev{\nu})\cev{\mu})^{\bot}\\
		& = \vec{\mu}((\ker \cev{\nu}\cap \im \cev{\mu})^{\bot})
		 = \vec{\mu}(((\ker \cev{\nu})^{\bot} + (\im \cev{\mu})^{\bot})^{\top\bot})\\
		& = \vec{\mu}((\im \rt{\nu}+\ker \rt{\mu})^{\top\bot}).
\end{align*}
By the transpose, $(\ker \vec{\nu}\vec{\mu})^{\top}=(\im \cev{\mu}+\ker \cev{\nu})^{\bot\top}\cev{\nu}$.
By \propref{prop:nondeg-adj}, $\mor{\mu}\mor{\nu}$ is nondegenerate if, and only if,
\begin{align*}
	\im \cev{\mu}\cev{\nu} & \geq  (\ker \vec{\nu} \vec{\mu})^{\top}=(\im \cev{\mu}+\ker \cev{\nu})^{\bot\top} \cev{\nu}; \textnormal{ and} \\
	\im \vec{\nu}\vec{\mu} & \geq  (\ker \cev{\mu}\cev{\nu})^{\bot}  = \vec{\mu}((\im \vec{\nu}+\ker \vec{\mu})^{\top\bot}).
\end{align*}
\end{proof}

\begin{coro}\label{coro:comp-rule}
Let $\mor{\mu}\in \Adj(A,B)$ and $\mor{\nu}\in \Adj(B,C)$ be nondegenerate.
If $\mor{\mu}$ is epic or $\mor{\nu}$ is monic then $\mor{\mu}\mor{\nu}$ is nondegenerate.
\end{coro}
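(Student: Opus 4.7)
The plan is to apply \thmref{thm:nondeg-comp} and check its two inequalities
\[
	(\im\cev{\mu}+\ker\cev{\nu})^{\bot\top}\cev{\nu}\leq \im\cev{\mu}\cev{\nu},\qquad
	\vec{\mu}(\im\vec{\nu}+\ker\vec{\mu})^{\top\bot}\leq\im\vec{\nu}\vec{\mu},
\]
in the two hypothesized cases. By \lemref{lem:epic-monic}, ``$\mor{\mu}$ epic'' means $\cev{\mu}$ is surjective and $\vec{\mu}$ is injective, while ``$\mor{\nu}$ monic'' means $\cev{\nu}$ is injective and $\vec{\nu}$ is surjective; and since $\mor{\mu},\mor{\nu}$ are nondegenerate, by the convention of \propref{prop:nondeg-adj} the bimaps $A$, $B$, $C$ are all nondegenerate, so both radicals $\lt{B}^{\bot}$ and $\rt{B}^{\top}$ vanish.

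Case 1: $\mor{\mu}$ epic. Then $\im\cev{\mu}=\lt{B}$ and $\ker\vec{\mu}=0$. The first inequality collapses because $\im\cev{\mu}+\ker\cev{\nu}=\lt{B}$ and $\lt{B}^{\bot\top}=0^{\top}=\lt{B}$ by nondegeneracy of $B$, so its left side is $\lt{B}\cev{\nu}=\im\cev{\nu}=\im\cev{\mu}\cev{\nu}$. In the second inequality, $\im\vec{\nu}+\ker\vec{\mu}=\im\vec{\nu}$, and nondegeneracy of $\mor{\nu}$ gives $(\im\vec{\nu})^{\top\bot}=\im\vec{\nu}$ via \propref{prop:nondeg-adj}(iii); thus the left side is $\vec{\mu}(\im\vec{\nu})=\im\vec{\nu}\vec{\mu}$. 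Both conditions of \thmref{thm:nondeg-comp} hold with equality.

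Case 2: $\mor{\nu}$ monic. This is dual. Now $\ker\cev{\nu}=0$ and $\im\vec{\nu}=\rt{B}$. In the first inequality $\im\cev{\mu}+\ker\cev{\nu}=\im\cev{\mu}$, and nondegeneracy of $\mor{\mu}$ gives $(\im\cev{\mu})^{\bot\top}=\im\cev{\mu}$, so the left side is $\im\cev{\mu}\cdot\cev{\nu}=\im\cev{\mu}\cev{\nu}$. In the second, $\im\vec{\nu}+\ker\vec{\mu}=\rt{B}$ and $\rt{B}^{\top\bot}=\rt{B}$ by nondegeneracy of $B$, so the left side is $\vec{\mu}(\rt{B})=\im\vec{\mu}=\vec{\mu}(\im\vec{\nu})=\im\vec{\nu}\vec{\mu}$. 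Again both inequalities hold (with equality), so \thmref{thm:nondeg-comp} yields that $\mor{\mu}\mor{\nu}$ is nondegenerate.

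This argument is entirely bookkeeping; there is no genuine obstacle. The only point worth flagging is to make sure every relevant radical vanishes before collapsing a closure $(-)^{\bot\top}$ or $(-)^{\top\bot}$ to the identity, which is why the nondegeneracy of the middle bimap $B$ is essential in the ``fullness'' sides of each case (Case 1 first inequality, Case 2 second inequality), while the nondegeneracy of $\mor{\mu}$ or $\mor{\nu}$ via \propref{prop:nondeg-adj}(iii) handles the ``image-stability'' sides.
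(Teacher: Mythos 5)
Your proposal is correct and follows essentially the same route as the paper: invoke \thmref{thm:nondeg-comp} and observe that under either hypothesis the two closure conditions collapse to equalities (the paper writes out only the case $\mor{\mu}$ epic, leaving the monic case to the transpose, whereas you spell out both). The only cosmetic difference is that you invoke nondegeneracy of $B$ to get $\lt{B}^{\bot\top}=\lt{B}$ and $\rt{B}^{\top\bot}=\rt{B}$, which in fact hold for any bimap by the Galois-connection inclusions $X\subseteq X^{\bot\top}\subseteq\lt{B}$.
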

\begin{proof}
If $\mor{\mu}$ is epic then $\im \cev{\mu}=\lt{B}$ and $\ker\vec{\mu}=0$; hence,
\begin{align*}
	(\im \cev{\mu}+\ker \cev{\nu})^{\bot\top}\cev{\nu}  & = (\im \cev{\mu})^{\bot\top} \cev{\nu} = \im \cev{\mu}\cev{\nu};\\
	\vec{\mu}(\im \vec{\nu}+\ker \vec{\nu})^{\top\bot}  & = \vec{\mu}(\im \vec{\nu}) = \im \vec{\nu}\vec{\mu}.
\end{align*}
By \thmref{thm:nondeg-comp}, $\mor{\mu}\mor{\nu}$ is nondegenerate. \end{proof}

We now have the tools required to show $\Adj^{\surd}(W)$ is closed to many of the familiar universal constructions such as kernels, images, and their duals.  Though products are not always nondegenerate (their associated adjoint-morphism need not be nondegenerate) often these are nondegenerate.

\begin{prop}\label{prop:nondeg-morph}
The kernel and cokernel of a nondegenerate adjoint-morphism are nondegenerate.
\end{prop}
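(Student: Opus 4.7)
The plan is to use the explicit construction of the kernel of $\mor{\mu}\in\Adj(B,C)$ recorded in \secref{sec:prod}, namely $\ker\mor{\mu}=\langle D,(\iota,\pi)\rangle$ where $D:\ker\cev{\mu}\times(\rt{B}/\im\vec{\mu})\to W$ is given by $uD(v+\im\vec{\mu})=uBv$, $\iota$ is the inclusion $\ker\cev{\mu}\hookrightarrow\lt{B}$, and $\pi$ is the projection $\rt{B}\onto\rt{B}/\im\vec{\mu}$. Two verifications are required: first that the bimap $D$ has trivial radicals, and second that the adjoint-morphism $(\iota,\pi)\in\Adj(D,B)$ satisfies the equivalent conditions of \propref{prop:nondeg-adj}. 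The cokernel will then follow by the transpose duality.

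For the bimap $D$, I would compute its two radicals directly from the formula. A coset $v+\im\vec{\mu}$ lies in $\lt{D}^{\bot}$ exactly when $uBv=0$ for every $u\in\ker\cev{\mu}$, i.e.\ when $v\in(\ker\cev{\mu})^{\bot}$. The hypothesis that $\mor{\mu}$ is nondegenerate supplies $(\ker\cev{\mu})^{\bot}=\im\vec{\mu}$ (\propref{prop:nondeg-adj}(i)), collapsing the coset to $0$. Dually, an element $u\in\ker\cev{\mu}$ lies in $\rt{D}^{\top}$ exactly when $uBv=0$ for every $v\in\rt{B}$, and since $B$ itself is nondegenerate this forces $u\in\rt{B}^{\top}=0$. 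So $D$ is nondegenerate.

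For $(\iota,\pi)$, I would check the equality form of \propref{prop:nondeg-adj}(i) coordinate by coordinate. Since $\iota$ is monic, $(\ker\iota)^{\bot}=0^{\bot}=\rt{D}=\im\pi$, giving one of the two required identities for free. For the other, $\ker\pi=\im\vec{\mu}$, and feeding $\mor{\mu}$'s nondegeneracy (in form \propref{prop:nondeg-adj}(iii)) into the computation gives
\begin{align*}
  (\ker\pi)^{\top}=(\im\vec{\mu})^{\top}=\bigl((\ker\cev{\mu})^{\bot}\bigr)^{\top}=\ker\cev{\mu}=\im\iota,
\end{align*}
which is the remaining identity. Hence $(\iota,\pi)$ is a nondegenerate adjoint-morphism.

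The cokernel case follows formally from the transpose duality of \secref{sec:Adj}. Both the property of being a nondegenerate bimap and the property of being a nondegenerate adjoint-morphism are manifestly preserved by $t$, so $\mor{\mu}^{t}\in\Adj(C^{t},B^{t})$ is nondegenerate; applying the kernel argument above inside $\Adj({}_{S^{op}}W_{R^{op}})$ produces a nondegenerate kernel there, and transposing back identifies it with the cokernel of $\mor{\mu}$ in $\Adj(W)$. I expect no serious obstacle; the one conceptual point is simply that the nondegeneracy hypothesis on $\mor{\mu}$ is precisely what closes the gap illustrated by \exref{ex:deg-morph}, while everything else is routine bookkeeping in the Galois connection $(\bot,\top)$.
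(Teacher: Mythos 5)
Your proof is correct and follows essentially the same route as the paper: the same explicit kernel bimap on $\ker\cev{\mu}\times(\rt{B}/\im\vec{\mu})$, the same use of $(\ker\cev{\mu})^{\bot}=\im\vec{\mu}$ to kill the right radical, and the same identity $(\im\vec{\mu})^{\top}=\ker\cev{\mu}$ for the induced morphism, with the cokernel handled by transpose. The only cosmetic difference is that you verify left-nondegeneracy of the kernel bimap and condition (i) of \propref{prop:nondeg-adj} directly, where the paper packages both steps by citing \propref{prop:iso-nondeg}(i).
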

\begin{proof}
Let $\mor{\mu}\in \Adj(B,C)$ be nondegenerate.  Take $\coker\rt{\mu}=\rt{B}/\im \rt{\mu}$ and $A=\ker \mor{\mu}:\ker \lt{\mu}\times \coker \rt{\mu} \to W$.  Now
\begin{align*}
	(\ker \lt{\mu})^{\bot} & =\{\im \rt{\mu}+v\in \coker\rt{\mu}: (\ker \rt{\mu})A(\im \rt{\mu}+v)=0\}\\
		& = \{\im \rt{\mu}+v\in \coker \rt{\mu}: (\ker \lt{\mu})Bv=0\}\\
		& = (\ker \lt{\mu})^{\bot}/\im \rt{\mu}=0,
\end{align*}
where the last equality follows by \propref{prop:nondeg-adj}.  
Hence, $A$ is right-nondegenerate.  Furthermore, $(\iota:\ker \lt{\mu}\into \lt{B},\pi: \rt{B}\onto \coker \rt{\mu})\in \Adj(A,B)$
and is monic.  By \propref{prop:nondeg-adj}, $(\ker \pi)^{\top}  =(\im \rt{\mu})^{\top}=(\ker \lt{\mu})^{\bot\top}=\ker\lt{\mu}=\im \iota$.
So by \propref{prop:iso-nondeg}(i), $(\iota,\pi)$ is nondegenerate; thus, kernels (and their implied inclusion maps) are nondegenerate.  
Cokernels follow by application of the transpose.
\end{proof}

\begin{prop}
Let $C$ be a nondegenerate bimap, $\mathcal{B}$ be a multiset of nondegenerate bimaps into $W$, and $\{\mor{\mu}_B\in \Adj(C,B): B\in \mathcal{B}\}$ a $\mathcal{B}$-indexed set of nondegenerate adjoint-morphisms.  Take $\langle \obot \mathcal{B},\{\mor{\pi}_B:B\in\mathcal{B}\}\rangle$ as the product in $\Adj(W)$ and $\mor{\mu}\in \Adj(C,\obot \mathcal{B})$ with, for all $B\in \mathcal{B}$, $\mor{\mu}\mor{\pi}_B=\mor{\mu}_B$.
\begin{enumerate}[(i)]
\item $\obot \mathcal{B}$ is nondegenerate and, for all $B\in\mathcal{B}$, $\mor{\pi}_B$ is nondegenerate.
\item $\mor{\mu}$ is nondegenerate if, and only if, $\im \rt{\mu}$ (equivalently 
$\sum\{\im \rt{\mu}_B:B\in\mathcal{B}\}$) is $\top\bot$-stable in $\rt{C}$.
\item If there is a $B\in \mathcal{B}$ such that $\mor{\mu}_B$ is monic, then $\mor{\mu}$ is nondegenerate.
\end{enumerate}
\end{prop}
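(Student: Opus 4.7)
The plan is to prove (i) by direct coordinate computations in the formula $u(\obot\mathcal{B})v=\sum_B u_BBv_B$, to deduce the forward direction of (ii) immediately from \propref{prop:nondeg-adj} together with \thmref{thm:perp-lattice}, to derive (iii) as a corollary of (ii), and to reserve the reverse direction of (ii) as the main substantive step.

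For (i), an element $u=(u_B)$ in the left radical of $\obot\mathcal{B}$ must satisfy $u_{B_0}B_0v_{B_0}=0$ for every $v_{B_0}\in\rt{B_0}$ (take $v=\iota_{B_0}(v_{B_0})$ in the defining formula), so $u_{B_0}\in\rt{B_0}^{\top}=0$ by nondegeneracy of $B_0$; the right radical is dual. For $\mor{\pi}_B=(\pi_B,\iota_B)$, \lemref{lem:epic-monic} makes $\mor{\pi}_B$ epic, and a similar coordinate plug-in establishes $(\ker\pi_B)^{\bot}\leq\im\iota_B$, whereupon \propref{prop:iso-nondeg}(ii) delivers nondegeneracy. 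For the forward direction of (ii), nondegeneracy of each $\mor{\mu}_B$ yields $\ker\cev{\mu}_B=(\im\vec{\mu}_B)^{\top}$ by \propref{prop:nondeg-adj}(iii), and intersecting over $B$ with \thmref{thm:perp-lattice} produces $\ker\cev{\mu}=\bigcap_B(\im\vec{\mu}_B)^{\top}=(\sum_B\im\vec{\mu}_B)^{\top}=(\im\vec{\mu})^{\top}$; hence if $\mor{\mu}$ is nondegenerate, $\im\vec{\mu}=(\ker\cev{\mu})^{\bot}=(\im\vec{\mu})^{\top\bot}$ is $\top\bot$-stable. Part (iii) is then an immediate corollary of (ii): monicity of $\mor{\mu}_{B_0}$ makes $\vec{\mu}_{B_0}$ epic by \lemref{lem:epic-monic}, and $\vec{\mu}_{B_0}=\iota_{B_0}\vec{\mu}$ forces $\im\vec{\mu}=\rt{C}$, which is $\top\bot$-stable in every nondegenerate $C$.

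For the reverse direction of (ii), the same intersection computation together with $\top\bot$-stability of $\im\vec{\mu}$ at once gives $(\ker\cev{\mu})^{\bot}=\im\vec{\mu}$, settling the first half of \propref{prop:nondeg-adj}(i); it remains to prove $(\ker\vec{\mu})^{\top}\leq\im\cev{\mu}$ in $\obot\mathcal{B}$. Starting from $u=(u_B)\in(\ker\vec{\mu})^{\top}$, plugging $v=\iota_B(v_B)$ with $v_B\in\ker\vec{\mu}_B$ forces $u_B\in(\ker\vec{\mu}_B)^{\top}=\im\cev{\mu}_B$ by nondegeneracy of $\mor{\mu}_B$, so there exists $c_B\in\cev{C}$ with $u_B=c_B\cev{\mu}_B$. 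Unpacking the global condition $u\in(\ker\vec{\mu})^{\top}$ translates to the compatibility $\sum_B c_BCy_B=0$ whenever $(y_B)\in\prod_B\im\vec{\mu}_B$ has $\sum_B y_B=0$ in $\rt{C}$. The heart of the argument---and the only point where $\top\bot$-stability of $\im\vec{\mu}$ is essentially used---is a gluing step: this compatibility must be leveraged to produce a single $c\in\cev{C}$ with $c-c_B\in\ker\cev{\mu}_B=(\im\vec{\mu}_B)^{\top}$ for every $B$, or equivalently to show that the tuple $(c_B+\ker\cev{\mu}_B)_B\in\prod_B\cev{C}/\ker\cev{\mu}_B$ lies in the diagonal image of $\cev{C}$. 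Once such $c$ exists, nondegeneracy of each $B$ forces $c\cev{\mu}_B=u_B$ coordinate by coordinate, so $u=c\cev{\mu}\in\im\cev{\mu}$ as required.
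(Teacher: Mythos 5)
Your part (i) and the forward direction of (ii) are correct and essentially the paper's argument, and routing (iii) through (ii) is also the intended path. The problem is the reverse direction of (ii): you correctly reduce everything to the ``gluing step'' --- producing a single $c\in\lt{C}$ with $c-c_B\in\ker\cev{\mu}_B$ for all $B$ --- but you never prove it; ``this compatibility must be leveraged to produce\dots'' names what remains, it is not an argument. That remaining step is the entire content of $(\ker\vec{\mu})^{\top}\leq\im\cev{\mu}$: your data only places the tuple $(c_B+\ker\cev{\mu}_B)_B$ in $\prod_B\lt{C}/\ker\cev{\mu}_B$ subject to relations coming from $\ker\vec{\mu}$, whereas membership in $\im\cev{\mu}$ demands it lie in the \emph{diagonal} image of $\lt{C}$; when $\ker\vec{\mu}$ is small (e.g.\ zero) your compatibility condition is vacuous while the diagonal image can be proper. (For what it is worth, the paper's own proof slides over exactly this point by writing $\prod\{\im\cev{\mu}_B\}=\im\cev{\mu}$, identifying the full product with the diagonal image, so you have isolated a real issue rather than merely omitted a routine verification.)

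Indeed the gluing can fail even when $\im\vec{\mu}$ is $\top\bot$-stable, so the gap cannot be closed as stated. Take $k$ a field, $V$ with basis $\{e_i,f_i:i\geq 1\}$, $C:V\times V\to k$ the form making this basis orthonormal, $K_1=\langle e_i\rangle$ and $K_2=\langle e_i+f_i-f_{i+1}\rangle$. Both are $\bot\top$-stable, with $Y_1=K_1^{\bot}=\langle f_i\rangle$ and $Y_2=K_2^{\bot}=\langle f_1-e_1,\,f_j+e_{j-1}-e_j\rangle$; one checks $Y_1\cap Y_2=0$ and $Y_1+Y_2=V$, yet $K_1+K_2=\langle e_i\rangle\oplus\langle f_i-f_{i+1}\rangle$ does not contain $f_1$. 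Let $B_j:V/K_j\times Y_j\to k$ be induced by $C$ and $\mor{\mu}_j=(V\onto V/K_j,\,Y_j\into V)$; these are nondegenerate epimorphisms by \propref{prop:iso-nondeg}(ii). Then $\vec{\mu}$ is injective, so $(\ker\vec{\mu})^{\top}$ is all of $V/K_1\times V/K_2$, while $\im\cev{\mu}$ is the diagonal image of $V$, which is proper because $K_1+K_2\neq V$; hence $\mor{\mu}$ is degenerate although $\im\vec{\mu}=V$ is $\top\bot$-stable. So the ``if'' half of (ii) needs an additional hypothesis (e.g.\ chain conditions making $\bot,\top$ a duality), and your (iii), derived from (ii), inherits the gap. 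Part (iii) itself is true and is better proved directly: if $\mor{\mu}_{B_0}$ is monic then $\im\vec{\mu}_{B_0}=\rt{C}$, so for $u=(u_B)\in(\ker\vec{\mu})^{\top}$ every identity $\vec{\mu}_B y'_B=\vec{\mu}_{B_0}y'_0$ supplies an element of $\ker\vec{\mu}$ forcing $c_B-c_{B_0}\in(\im\vec{\mu}_B)^{\top}=\ker\cev{\mu}_B$ for every $B$, and $c=c_{B_0}$ is the desired glue.
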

\begin{proof}
For (i), observe that $\llt{(\obot \mathcal{B})}^{\bot}=\prod\{\lt{B}^{\bot}:B\in\mathcal{B}\}=0$ and similarly $\obot \mathcal{B}$ is also left-nondegenerate.  For each $B\in\mathcal{B}$, put $\mor{\pi}_B=(\pi_B,\iota_B)$ so that
\begin{align*}
	(\ker \pi_B)^{\bot} & = \left(\prod (\lt{\mathcal{B}}-\{\lt{B}\})\right)^{\bot}
		 = \rt{B}\iota_B.
\end{align*}
As $B$ is nondegenerate and $(\pi_B,\iota_B)$ is epic, by \propref{prop:iso-nondeg}(ii), $\mor{\pi}_B$ is nondegenerate.

Next we show (ii).  For each $B\in\mathcal{B}$, put $(\lt{\mu}_B,\rt{\mu}_B)=\mor{\mu}_B$.  Now $(\ker \rt{\mu}_B)\iota_B \rt{\mu}=(\ker \rt{\mu}_B)\rt{\mu}_B=0$ so that $(\ker \rt{\mu}_B)\iota_B\leq \ker \rt{\mu}$.  Hence, $\coprod\{\ker \rt{\mu}_B: B\in\mathcal{B}\}\leq \ker \rt{\mu}$ and
\begin{align*}
	(\ker \rt{\mu})^{\top} & \leq \left(\coprod\{\ker \rt{\mu}_B: B\in\mathcal{B}\}\right)^{\top}
		 = \prod \{ (\ker \rt{\mu}_B)^{\top} : b\in\mathcal{B}\}\\
		& = \prod \{ \im \lt{\mu}_B : b\in\mathcal{B}\} = \im \lt{\mu}.
\end{align*}
By \propref{prop:nondeg-adj}, to show that $\mor{\mu}$ is nondegenerate it remains to show that $(\ker \lt{\mu})^{\bot}\leq \im \rt{\mu}$.  By \thmref{thm:perp-lattice},
\begin{align*}
	(\ker \lt{\mu})^{\bot} & = \left(\bigcap\{\ker \lt{\mu}_B: B\in\mathcal{B}\}\right)^{\bot}
		 = \left(\sum \{\im \rt{\mu}_B : B\in\mathcal{B}\}\right)^{\top\bot}
		 = (\im \rt{\mu})^{\bot\top}.
\end{align*}
Therefore, it is necessary and sufficient to show $\im \rt{\mu}$ is $\top\bot$-stable.

(iii) follows from (ii) and \corref{coro:comp-rule}.
\end{proof}

\subsection{Division maps}\label{sec:div}
We finish our treatment of nondegenerate bimaps by demonstrating how division maps play the part of ``simple'' nondegenerate bimaps with respect to adjoint-morphisms.  Say that a bimap $B$ is \emph{nondegenerate-simple} if $B$ is nonzero and every
nondegenerate adjoint-morphism from $B$ is either zero or a monomorphism.  

\begin{thm}\label{thm:division}
The nondegenerate-simple bimaps are division maps.
\end{thm}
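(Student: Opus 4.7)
The plan is to prove the contrapositive: if $B : U \times V \to W$ is a nondegenerate bimap that is not a division map, I will exhibit a nondegenerate adjoint-morphism out of $B$ that is nonzero but not a monomorphism, contradicting nondegenerate-simplicity. By hypothesis there are nonzero $u_0 \in U$ and $v_0 \in V$ with $u_0 B v_0 = 0$. Set $X = (Ru_0)^{\bot\top} \subseteq U$. Then $u_0 \in X$, so $X \neq 0$, and by the Galois identity $X^{\bot} = (Ru_0)^{\bot\top\bot} = (Ru_0)^{\bot}$ (Theorem~\ref{thm:perp-lattice}), $v_0 \in X^{\bot}$, so $X^{\bot} \neq 0$. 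Because $B$ is nondegenerate ($U^{\bot} = 0$), this forces $X \neq U$. Thus $Y := X^{\bot}$ is a proper nonzero $\top\bot$-closed submodule of $V$ with $Y^{\top} = X$.

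Next, define the quotient-restriction bimap $E : (U/X) \times Y \to W$ by $(u + X) E y := u B y$, well-defined because $X B Y = 0$. Its right radical consists of $u + X$ with $u B Y = 0$, i.e., $u \in Y^{\top} = X$, so it vanishes in $U/X$; its left radical consists of $y \in Y$ with $U B y = 0$, i.e., $y \in U^{\bot} = 0$. Hence $E$ is nondegenerate. Taking $\rho : U \twoheadrightarrow U/X$ the quotient and $\iota : Y \hookrightarrow V$ the inclusion, the identity $\rho(u) \, E \, v = (u+X) \, E \, v = u B v = u B \iota(v)$ gives $\mor{\mu} := (\rho, \iota) \in \Adj(B, E)$ directly from the definition of $E$.

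To see $\mor{\mu}$ is nondegenerate, apply Proposition~\ref{prop:nondeg-adj}: computed in $B$, $(\ker \rho)^{\bot} = X^{\bot} = Y = \im \iota$; computed in $E$, $(\ker \iota)^{\top} = 0^{\top} = \lt{E} = \im \rho$. Because $X \neq U$, $\rho$ is nonzero, so $\mor{\mu}$ is nonzero; because $\ker \rho = X \neq 0$, $\rho$ is not monic, so by Lemma~\ref{lem:epic-monic}, $\mor{\mu}$ is not a monomorphism. This is the sought contradiction, so $B$ must be a division map. The converse (every division map is nondegenerate-simple) is a quick check: for nondegenerate $\mor{\mu} \in \Adj(B, C)$ with $\cev{\mu} \neq 0$ but $\ker \cev{\mu} \ni u \neq 0$, one would have $\im \vec{\mu} = (\ker \cev{\mu})^{\bot} \subseteq u^{\bot} = 0$ and then Proposition~\ref{prop:nondeg-adj} forces $\cev{\mu} = 0$ as well.

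The main difficulty is entirely Galois bookkeeping---keeping track of whether each $\bot$ or $\top$ is computed in $B$ or in $E$, and invoking Theorem~\ref{thm:perp-lattice} to confirm identities like $X^{\bot\top\bot} = X^{\bot}$ and $X^{\bot\top} = X$. The constructions of $E$ and $\mor{\mu}$ were engineered precisely so that the kernels and images match up on the nose in Proposition~\ref{prop:nondeg-adj}, so no subtle obstacle remains once the proper nonzero $\bot\top$-closed submodule $X$ is identified.
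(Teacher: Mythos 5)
Your proposal is correct and follows essentially the same route as the paper: for $u_0Bv_0=0$ you pass to the quotient-restriction bimap on $U/(Ru_0)^{\bot\top}\times (Ru_0)^{\bot}$ (identical to the paper's $C$ on $\lt{B}/u^{\bot\top}\times u^{\bot}$), show the associated epimorphism is nondegenerate, and observe it is neither zero nor monic unless $u_0=0$ or $v_0=0$; the converse is likewise the paper's argument. The only differences are cosmetic: you phrase the forward direction contrapositively and verify nondegeneracy directly from Proposition~\ref{prop:nondeg-adj}(i) rather than via Proposition~\ref{prop:iso-nondeg}(ii), and you swap the paper's left/right radical terminology.
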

\begin{proof}
Suppose that $B$ is nondegenerate-simple.  
Let $u\in \lt{B}$ and $v\in \rt{B}$ such that $uBv=0$.  
Define the bimap $C:\lt{B}/u^{\bot\top}\times u^{\perp}\to W$ such that 
\begin{align}
	(x+u^{\bot\top})Cy & =xBy & (\forall x\in \lt{B},\forall y\in u^{\bot}).
\end{align}
Note $C$ is well-defined.
Take $(\pi:\lt{B}\onto \lt{B}/u^{\bot\top},\iota:u^{\perp}\into \rt{B})$ which is an adjoint-epimorphism from $B$ to $C$.  As $B$ is nondegenerate and $\rt{C}^{\top}=u^{\bot\top}/u^{\bot\top}=0$, $C$ is also left-nondegenerate.   Now $(\ker \pi)^{\bot}=u^{\bot\top\bot}=u^{\bot}=\im \iota$.  By \propref{prop:iso-nondeg}(ii), $(\pi,\iota)$ is nondegenerate.  As $B$ is nondegenerate-simple, $(\pi,\iota)$ is zero or monic.  If it is zero then $v\in u^{\bot}=0$ so that $v=0$.  If $(\pi,\iota)$ is monic then $u\in u^{\bot\top}=0$ so that $u=0$.  In every case, if $uBv=0$ then $u=0$ or $v=0$ so that $B$ is a division map.

Next suppose that $B$ is a division map.  Let $\mor{\mu}:B\to C$ be an 
a nondegenerate adjoint-morphism.  As $(\ker \lt{\mu})B(\im \rt{\mu})=0$ and $B$ is a division map, either $\ker \lt{\mu}=0$ or $\im \rt{\mu}=0$.  Now invoke \propref{prop:nondeg-adj}: if $\ker \lt{\mu}=0$ then $\im \rt{\mu}=(\ker \lt{\mu})^{\bot}=\rt{B}$ so that $\lt{\mu}$ is monic and $\rt{\mu}$ is epic, that is, $\mor{\mu}$ is monic; otherwise, $\im \rt{\mu}=0$ so $\ker \lt{\mu}=(\im \rt{\mu})^{\top}=\lt{B}$ and $\mor{\mu}=(0,0)$.  As $\mor{\mu}$ was arbitrary, $B$ is nondegenerate-simple.
\end{proof}
\section{Division algebras up to isotopism}\label{sec:close}

An isotopism $(\phi,\gamma;\kappa)=(\phi \kappa^{-1},\gamma \kappa^{-1};1)(\kappa,\kappa;\kappa)$ is a product of an isomorphism and a principal isotopism.  Also,  $(\phi\kappa^{-1},\kappa\gamma^{-1})$ is an adjoint-isomorphism.  In fact:
\begin{coro}\label{thm:iso-iso}
For a commutative ring $k$, the principal isotopism classes of division $k$-algebras on $W$ is in bijection with the 
class of nondegenerate-simple bimaps $B:W\times W\to W$ up to adjoint-isomorphism.
\end{coro}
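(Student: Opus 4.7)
The plan is to combine \thmref{thm:division} with a direct bijection between principal isotopisms and adjoint-isomorphisms in the setting $\lt{B}=\rt{B}=W$. By \thmref{thm:division}, nondegenerate-simple bimaps $B:W\times W\to W$ are precisely division bimaps, which over a commutative ring $k$ are the same data as multiplications of (possibly nonassociative) division $k$-algebras on $W$. Hence the two underlying classes of objects coincide, and it suffices to match the two equivalence relations.

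Given bimaps $B,B':W\times W\to W$, I would exhibit a bijection between principal isotopisms $B\to B'$ and adjoint-isomorphisms $B\to B'$. A principal isotopism $(\phi,\gamma;1_W)$ is an invertible pair $(\phi,\gamma)$ with $u\phi B'\gamma v=uBv$ for all $u,v\in W$. Substituting $v\mapsto\gamma^{-1}v$ rewrites this as $u\phi B'v=uB\gamma^{-1}v$, which is exactly the defining identity for $(\phi,\gamma^{-1})\in\Adj(B,B')$. Since $\phi$ and $\gamma^{-1}$ are both invertible, \lemref{lem:epic-monic}(iii) makes $(\phi,\gamma^{-1})$ an adjoint-isomorphism. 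Conversely, an adjoint-isomorphism $\mor{\mu}\in\Adj(B,B')$ yields the principal isotopism $(\cev{\mu},\vec{\mu}^{-1};1_W)$, because $u\cev{\mu}B'\vec{\mu}^{-1}v=uB\vec{\mu}\vec{\mu}^{-1}v=uBv$. The assignments $(\phi,\gamma;1_W)\leftrightarrow(\phi,\gamma^{-1})$ are manifestly mutually inverse.

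Since this bijection is constructed for every pair $B,B'$, the relations ``principally isotopic'' and ``adjoint-isomorphic'' coincide on the set of division bimaps $W\times W\to W$, so the equivalence classes are in bijection, proving the corollary. There is no real obstacle here: the argument is a direct unwinding of the defining identities, noting only that invertibility of $\gamma$ is equivalent to invertibility of $\gamma^{-1}$, and invoking \thmref{thm:division} to identify the object classes on the two sides.
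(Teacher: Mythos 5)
Your argument is correct and follows essentially the same route as the paper, which (with $\kappa=1_W$) records precisely the correspondence $(\phi,\gamma;1_W)\leftrightarrow(\phi,\gamma^{-1})$ between principal isotopisms and adjoint-isomorphisms and combines it with \thmref{thm:division} to identify the object classes. Your unwinding of the defining identities and the check that the two assignments are mutually inverse is exactly what the paper intends.
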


\section*{Acknowledgments}

Thanks to the referee for better reasoning, to D. Oury for helpful discussion, and to W. Kantor and D. Shapiro for remarks on semifields and nonsingular bimaps.


\begin{bibdiv}
\begin{biblist}

\bib{Albert:nonass-I}{article}{
   author={Albert, A. A.},
   title={Non-associative algebras. I. Fundamental concepts and isotopy},
   journal={Ann. of Math. (2)},
   volume={43},
   date={1942},
   pages={685--707},
   issn={0003-486X},
   review={\MR{0007747 (4,186a)}},
}

\bib{AF:rings}{book}{
   author={Anderson, Frank W.},
   author={Fuller, Kent R.},
   title={Rings and categories of modules},
   note={Graduate Texts in Mathematics, Vol. 13},
   publisher={Springer-Verlag},
   place={New York},
   date={1974},
   pages={viii+339},
   review={\MR{0417223 (54 \#5281)}},
}

\bib{Baer:class-2}{article}{
   author={Baer, Reinhold},
   title={Groups with abelian central quotient group},
   journal={Trans. Amer. Math. Soc.},
   volume={44},
   date={1938},
   number={3},
   pages={357--386},
   issn={0002-9947},
   review={\MR{1501972}},
   doi={10.2307/1989886},
}

\bib{Bayer-Fluckiger:Hasse}{article}{
   author={Bayer-Fluckiger, Eva},
   title={Principe de Hasse faible pour les syst\`emes de formes
   quadratiques},
   language={French},
   journal={J. Reine Angew. Math.},
   volume={378},
   date={1987},
   pages={53--59},
   issn={0075-4102},
   review={\MR{895284 (88g:11015)}},
   doi={10.1515/crll.1987.378.53},
}

\bib{Bayer-Fluckiger:any}{article}{
   author={Bayer-Fluckiger, Eva},
   title={Syst\`emes de formes quadratiques d'alg\`ebre donn\'ee},
   language={French, with English and French summaries},
   journal={C. R. Acad. Sci. Paris S\'er. I Math.},
   volume={321},
   date={1995},
   number={10},
   pages={1291--1293},
   issn={0764-4442},
   review={\MR{1363566 (96m:11025)}},
}

\bib{BW:isom}{article}{
	author={Brooksbank, P.A.},
	author={Wilson, J. B.},
	title ={Computing isometry groups of hermitian maps (to appear)},
 	journal={Trans. Amer. Math. Soc.}
}

\bib{BW:aut-tensor}{article}{
	author={Brooksbank, P.A.},
	author={Wilson, J. B.},
	title ={The autopism and pseudo-isometry groups of tensor products (in preparation)},
}

\bib{GALOIS}{book}{
   author={Davey, B. A.},
   author={Priestley, H. A.},
   title={Introduction to lattices and order},
   edition={2},
   publisher={Cambridge University Press},
   place={New York},
   date={2002},
   pages={xii+298},
   isbn={0-521-78451-4},
   review={\MR{1902334 (2003e:06001)}},
}

\bib{Freyd}{book}{
   author={Freyd, Peter},
   title={Abelian categories. An introduction to the theory of functors},
   series={Harper's Series in Modern Mathematics},
   publisher={Harper \& Row Publishers},
   place={New York},
   date={1964},
   pages={xi+164},
   review={\MR{0166240 (29 \#3517)}},
}

\bib{Ham}{misc}{
	author={Hamilton, W. R.},
	title = {On some Extensions of Quaternions}
	editor={D. Wilkins},
	note={Philosophical Magazine, 7 (1854), pp. 492-499; http://www.emis.de/classics/Hamilton/ExtQuat.pdf}
}

\bib{Higman:enum}{article}{
   author={Higman, Graham},
   title={Enumerating $p$-groups. I. Inequalities},
   journal={Proc. London Math. Soc. (3)},
   volume={10},
   date={1960},
   pages={24--30},
   issn={0024-6115},
   review={\MR{0113948 (22 \#4779)}},
}

\bib{Jacobson:Lie}{book}{
   author={Jacobson, Nathan},
   title={Lie algebras},
   note={Republication of the 1962 original},
   publisher={Dover Publications Inc.},
   place={New York},
   date={1979},
   pages={ix+331},
   isbn={0-486-63832-4},
   review={\MR{559927 (80k:17001)}},
}

\bib{Kantor:semifield}{article}{
   author={Kantor, William M.},
   title={Finite semifields},
   conference={
      title={Finite geometries, groups, and computation},
   },
   book={
      publisher={Walter de Gruyter GmbH \& Co. KG, Berlin},
   },
   date={2006},
   pages={103--114},
   review={\MR{2258004 (2007i:51003)}},
}

\bib{Loos:pairs}{book}{
   author={Loos, Ottmar},
   title={Jordan pairs},
   series={Lecture Notes in Mathematics, Vol. 460},
   publisher={Springer-Verlag},
   place={Berlin},
   date={1975},
   pages={xvi+218},
   review={\MR{0444721 (56 \#3071)}},
}

\bib{Morita:dual}{article}{
   author={Morita, Kiiti},
   title={Duality for modules and its applications to the theory of rings
   with minimum condition},
   journal={Sci. Rep. Tokyo Kyoiku Daigaku Sect. A},
   volume={6},
   date={1958},
   pages={83--142},
   review={\MR{0096700 (20 \#3183)}},
}

\bib{Pareigis}{book}{
   author={Pareigis, Bodo},
   title={Categories and functors},
   series={Translated from the German. Pure and Applied Mathematics, Vol.
   39},
   publisher={Academic Press},
   place={New York},
   date={1970},
   pages={viii+268},
   review={\MR{0265428 (42 \#337b)}},
}

\bib{Shapiro:nonsingular}{book}{
   author={Shapiro, Daniel B.},
   title={Compositions of quadratic forms},
   series={de Gruyter Expositions in Mathematics},
   volume={33},
   publisher={Walter de Gruyter \& Co.},
   place={Berlin},
   date={2000},
   pages={xiv+417},
   isbn={3-11-012629-X},
   review={\MR{1786291 (2002f:11046)}},
}

\bib{Wagner:isoms}{article}{
   author={Wagner, A.},
   title={On the classification of the classical groups},
   journal={Math. Z.},
   volume={97},
   date={1967},
   pages={66--76},
   issn={0025-5874},
   review={\MR{0209366 (35 \#264)}},
}

\bib{Wilson:unique-cent}{article}{
   author={Wilson, James B.},
   title={Decomposing $p$-groups via Jordan algebras},
   journal={J. Algebra},
   volume={322},
   date={2009},
   number={8},
   pages={2642--2679},
   issn={0021-8693},
   review={\MR{2559855 (2010i:20016)}},
   doi={10.1016/j.jalgebra.2009.07.029},
}

\end{biblist}
\end{bibdiv}

\end{document}